\documentclass{article}
\usepackage{amsthm}
\usepackage{amsmath}

\newtheorem{theorem}{Theorem}
\setlength{\oddsidemargin}{0.25in}

\setlength{\textwidth}{6in}

\setlength{\topmargin}{-0.25in}

\setlength{\textheight}{8in}
\newtheorem{lem}{Lemma}
\newtheorem{prop}{Proposition}
\newtheorem{problem}{Problem}
\newtheorem{cor}{Corollary}
\newtheorem{claim}{Claim}

\begin{document}

\title{The $L^p$ CR Hartogs separate analyticity theorem for convex domains}         
\author{Mark G. Lawrence}        

\maketitle

\begin{abstract}
In this note, a very general theorem of the CR Hartogs type is proved for almost generic strictly convex domains in $\bf C^n$ with real analytic boundary. Given such a domain $D$, and given an $L^p$ function $f$ on $\partial D$ which has holomorphic extensions on the slices of $D$ by complex lines parallel to the coordinate axes, $f$ must be CR---i.e. $f$ has a holomorphic extension to $D$ which is in the Hardy space $H^p(D)$. As a corollary, the Szeg\H{o} kernel is   shown to be the strong operator limit of $(\pi_1 \pi_2)^n$, where $\pi_i$ is the projection onto $z_i$  holomorphically extendible $L^2(\partial D)$ functions (in $\bf C^2$, with a slightly more complicated formula in $\bf C^n$). 
\end{abstract}

\section{Introduction}       
A fundamental question in function theory of several complex variables is to determine when a function defined on the boundary of a domain $D$ extends holomorphically to the domain. Unlike the case of the circle, there is no obvious small set of moments which suffices without redundancy.
In \cite{La}, the author showed that on  a 1-dimensional family of ellipsoids in $\bf C^2$ one can detect whether an $L^{\infty}$ function on $\partial D$ is CR by checking for holomorphic extensions on vertical and horizontal complex lines meeting $D$. This is a case of using the so-called "1-dimensional extension" property to check for  analytic extensions of functions.

Here is the CR Hartogs separate analyticity question, for domains in $\bf C^n$.

\begin{problem} Let $D\subseteq {\bf C^n}$ be a stricly convex domain. Let $f\in C(\partial D)$ be a continuous (resp.  $L^p$) function such for every (resp. almost every) affine 1-dimensional non-empty slice $L$  of $D$ parallel to a coordinate axis, $f|_{\partial D\cap L}$ extends holomorphically to $D\cap L$. Does this guarantee that $f$  is a $CR$ function---i.e. must $f$ have an extension to $D$ which is holomorphic in all variables?
\end{problem}

We call functions which satisfy the 1-dimensional extension property on the set of coordinate lines intersecting $D$ "CR Hartogs" functions, and write $f \in {\rm CRH}$. Other papers have considered this problem, or variants thereof, for continuous functions, while our result is for $L^p$ functions. 
To re-phrase Problem 1, we want to know if 
CRH=CR, for a given domain $D$. We remark that CRH=CR is false for the ball. In coordinates $(z,w)$, the function $f(z,w)=|z|^2$ is a CR Hartogs function, but $f$ is not CR. In \cite{La} the $L^2$ CRH functions on the sphere in $\bf C^2$ are classified.
The restriction to convex domains is natural, but may not be essential.

There are several papers which study a related problem on the ball in $\bf C^n$. Given a finite set of points $a_1, \dots, a_k \in B_n$, and given a function  $f$ on the boundary of $B_n$, one wants to show that if $f$ has holomorphic extensions on all the slices of $B_n$ by complex lines through the $a_i$'s, then $f$ is holomorphic. For a generic set of points in $B_n$, $k=n+1$ suffices. There are several versions and proofs of this kind of result in \cite{Bar}, \cite{KM}, 
\cite{Gl2}; 
in the third, the points are not restricted to be in the ball. 
We note that these theorems all deal with functions which are continuous.
It is natural to consider the case of $L^p$ functions with $p\ge 1$. By virtue of having the $p=2$ case of the theorem, we derive a new expression for the Szeg\H{o} kernel related to holomorphic extension on horizontal and vertical lines. 

There are many earlier theorems on testing holomorphic extension on lines. Important papers in this line of research were  \cite{St}, where the  family of all lines is shown to suffice for any $C^1$ bounded domain. In \cite{Di}, smaller families of lines were used; compared to our result, Dinh's theorem would require a real 3-dimensional set of lines for a domain in $\bf C^2$.  In a different direction, the very general    "parametric argument principle" of Agranovsky, \cite{Ag} is thematically related, but not directly applicable to the CR Hartogs problem.

In this paper we prove that CRH=CR for a large class of strictly convex domains with real analytic boundary in $\bf C^n$. Our method is based on duality and weak derivatives. The method of proof is outlined and a general theorem  proved in section 2; in sections 3 and 4, criteria for applying the main theorem are given. In section 5 we prove  by different means that the equation CRH=CR is valid under real analytic variations of a function and domain. Altogether, our results support the conjecture that the CR Hartogs theorem is generically true. 

\section{A general criterion for CRH=CR}
We deal with the case of domains in $\bf C^2$ first. Our criterion does not require domains with real analytic boundary; however, the verification for some domains uses the power series structure of the defining function. Let $D\subseteq \bf C^2$ be a convex smoothly bounded domain with some choice of smooth defining function $\rho$, where $d\rho\ne 0$ on $\partial D$. Suppose $f$ is a $C^1$ CRH function on $\partial D$. 
Let $F$ be the extension of $f$ to $D$ which is holomorphic on horizontal slices, and $G$ the extension which is holomorphic on vertical slices. Applying the tangential CR operator
$L=\rho_{\overline w}\frac{\partial}{\partial\overline z}
-\rho_{\overline z}\frac{\partial}{\partial \overline w}$ to both sides of the equation $F=G$ on $\partial D$, we obtain
\begin{equation}\label{eq1} \rho_{\overline w}F_{\overline z}=-\rho_{\overline z}G_{\overline w}.
\end{equation}

This equation can be re-written in the language of differential forms as
\begin{equation}\label{eq2} F_{\overline w}dz\wedge dw\wedge d{\overline w}=G_{\overline z}dw\wedge dz\wedge d{\overline z}\end{equation}
on $\partial D$. 

Denote the left side by $\omega$ and the right by $\eta$. Then by Stokes's theorem, it follows that $\int_{\partial D}p \omega=0$ for any  function  continuous $p$ which extends holomorphically to each horizontal slice, 
and $\int_{\partial D}q\eta=0$ for any continuous   $q$ which extends holomorphically to each vertical slice. From $F_{\overline w}dz\wedge dw\wedge d\overline{w}=LF\frac{dz}{\rho_{\overline z}}\wedge dw\wedge d{\overline w}=LG\frac{dz}{\rho_z}\wedge dw\wedge d{\overline w}=G_{\overline z}\frac{dw}{\rho_{\overline w}}\wedge dz\wedge d\overline z$, we see that $F_{\overline w}dz\wedge dw\wedge d{\overline w}$ is the product of a continuous function and the continuous  complex volume form, $dV=\frac{dz}{\rho_z}\wedge dw\wedge d{\overline w}=\frac{dw}{\rho_{\overline w}}\wedge dz\wedge d\overline z$. 

To show that $\omega=\eta=0$, it suffices to know that
\begin{equation}\label{eq3} C(\partial D)=\overline{P[z,w,\overline w]+P[z,\overline z,w]}, 
\end{equation}
where the closure of the polynomial algebras on the right side is taken in the uniform norm on $\partial D$.  

For any domain $D$ satisfying the above criterion, it follows that CRH=CR for $C^1$ functions. The next step is to use weak derivatives and an approximation lemma to prove the theorem for $L^p$ functions, with $p\ge 1$.
For background in CR extension, see \cite{Bog}.
Let $f\in L^p(\partial D)$. To show that the weak derivative $Lf=0$, it suffices to show that 
$$\int_{\partial D} (Lf)PdV=0$$ 
for any polynomial $P$. The splitting of the mixed anti-holomorphic terms must occur in the $C^1$ norm  on $\partial D$ for this step.

\begin{theorem}{\label{A}}
Let $D\subseteq \bf C^2$ be a strictly convex, smoothly bounded domain such that for any polynomial $g(z,w,\overline z,\overline w)$, there exist sequences of polynomials $p_n(z,\overline z,w)$ and $q_n(z,w,\overline w)$ such that 
$||g-p_n-q_n||_1\rightarrow 0$ as $n\rightarrow \infty$, where $||\cdot||_1$ is the $C_1$ norm on $\partial D$. Then CRH=CR for $L^p$ functions on $\partial D$, $p\ge 1$. 
\end{theorem}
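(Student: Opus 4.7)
The plan is to follow the roadmap sketched just before the theorem: establish $Lf=0$ in the distributional sense on $\partial D$, after which the classical CR-extension theorem for strictly pseudoconvex boundaries (cited from Boggess) furnishes an extension in $H^p(D)$. For $f\in L^p(\partial D)$, one has $Lf=0$ distributionally iff $\int_{\partial D} f\, L^*\phi\, dS=0$ for every smooth $\phi$. Since $\partial D$ is a compact real-analytic manifold, polynomials are dense in $C^k(\partial D)$ for every $k$, so it suffices to verify this identity for $\phi=g$ an arbitrary polynomial in $z,w,\overline z,\overline w$.

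For each such $g$, the approximation hypothesis produces $p_n\in P[z,\overline z,w]$ and $q_n\in P[z,w,\overline w]$ with $p_n+q_n\to g$ in $C^1(\partial D)$; hence $L^*(p_n+q_n)\to L^*g$ in $C^0$, and H\"older's inequality against $f\in L^p$ reduces the task to the termwise identities
\[\int_{\partial D} f\, L^* p_n\, dS = 0 \quad\text{and}\quad \int_{\partial D} f\, L^* q_n\, dS = 0.\]
These are the $L^p$ weak-derivative avatars of the Stokes identities $\int p\,\omega=0$ and $\int q\,\eta=0$ from the $C^1$ sketch. I would prove each by disintegration along the corresponding coordinate foliation: for $q_n(z,\overline z,w)$, polynomial in $w$, reduce to an integral over the vertical slices $\{z=z_0\}\cap\partial D$, on which CRH identifies $f(z_0,\cdot)$ as the Hardy-space boundary value of a slice-holomorphic function, so Cauchy's theorem annihilates its pairing against any polynomial in $w$; Fubini then reassembles the full surface integral. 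The symmetric argument on horizontal slices handles $p_n(z,w,\overline w)$, which is polynomial in $z$.

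The main obstacle is this slice-disintegration step: one must show that $f\in L^p(\partial D)$ restricts, for almost every coordinate value, to a boundary value in the Hardy space of the corresponding slice; decompose surface measure on $\partial D$ compatibly with the two coordinate foliations; and handle the degenerate slices where $\partial D$ is tangent to a coordinate direction. Strict convexity of $D$ guarantees smoothly bounded planar slices away from those tangent points, so classical slice-wise Hardy theory and Cauchy's theorem apply; the $C^1$-strength (rather than merely $C^0$) in the approximation hypothesis is precisely the regularity margin needed to close the limit when $f$ has no classical derivatives and $L^*$ costs one derivative of the test function.
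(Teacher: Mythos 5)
Your proposal is correct in outline and shares the paper's overall strategy: interpret $Lf=0$ weakly, reduce by density to pairing against polynomials, and use the $C^1$ approximation hypothesis to split a general mixed polynomial $g$ into $p_n(z,w,\overline w)+q_n(z,\overline z,w)$, for which the pairing vanishes termwise. The genuine difference lies in how the base-case identities $\int_{\partial D}(Lf)\,p\,dV=0$ and $\int_{\partial D}(Lf)\,q\,dV=0$ are established. The paper approximates $f$ in $L^p$ by functions $f_n$ that extend holomorphically on almost every slice in one coordinate direction, asserts the identity for each $f_n$ (via the Stokes computation from the $C^1$ discussion), and passes to the limit; this sidesteps measure-theoretic disintegration but silently requires constructing such a dense sequence of sufficiently regular one-directionally extendible functions, which the paper does not do. You instead work directly with $f$, disintegrating the boundary integral over the coordinate foliation and invoking Cauchy's theorem in the slice Hardy spaces, then Fubini; this is more self-contained but forces you to confront exactly the issues you flag (a.e.-slice Hardy membership, compatibility of surface measure with the two foliations, and the tangential slices), which are genuine and are also latent in the paper's version. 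One point you should make explicit rather than gesture at: the weak pairing must be taken against the complex volume form $dV=\frac{dw}{\rho_{\overline w}}\wedge dz\wedge d\overline z=\frac{dz}{\rho_z}\wedge dw\wedge d\overline w$ (equivalently, against the forms $\omega,\eta$), not against $L^*\phi\,dS$ for the adjoint with respect to surface measure; with the latter normalization the coefficient $\rho_{\overline w}$ and the divergence term contaminate the slice restriction, so the integrand on a vertical slice is no longer $f$ times a function holomorphic in $w$ and Cauchy's theorem does not directly apply. With that normalization fixed, your slice-wise argument closes, and the two proofs are otherwise equivalent in strength.
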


\begin{proof}
First we show that $$\int_{\partial D}(Lf)p(z,w,\overline w)dV=0$$
and 
$$\int_{\partial D}(Lf)q(z,\overline z,w)dV=0$$ for any polynomials $p$  and $q$. 
Let $f_n\in L^p(\partial D)$ converge to $f$ in $L^p$, with $f_n$ extending holomorphically on almost every horizontal slice. We have 
$$\int_{\partial D}(Lf_n)p(z,w,\overline w)dV=0;$$
taking the limit, we get desired result for pairing with a polynomial $p(z,w,\overline w)$.
Taking  a sequence $g_n$ converging to $f$ in $L^p$ with holomorphic extensions  on almost all vertical slices gives the result for pairing with a polynomial $q(z,\overline z,w)$.

Next, we need to show that $$\int_{\partial D}(Lf)p(z,w,\overline z,\overline w)dV=0$$ for an arbitrary polynomial. This follows similar to above by choosing sequences $p_n(z,w,\overline w)$ and $q_n(z,\overline z, w)$ such that $p_n+q_n$ converges to $p(z,w,\overline z,\overline w)$ in the $C^1$ norm.   
\end{proof}

Although the criteria for a $C^1$ versus $L^p$ version of the CR Hartogs theorem are different, we only deal with the latter case in what follows, as our approximation method easily yields $C^1$ convergence. 

\section  {The case of a general ellipsoid}
The ellipsoid $D$ with defining function $\rho(z,w)=|z|^2+|w|^2-1+\epsilon(zw +\overline{zw})$ with $\epsilon >0$ 
was covered in the previous paper, where the equality CRH=CR was proved for $L^{\infty}$ functions.
$D$ has some  helpful special properties which are not generic: $D$ has cirles as vertical and horizontal cross-sections; also relevant is that $\rho_{\overline z}$ is holomorphic in $z$ and
$\rho_{\overline w}$ is holomorphic in $w$. 
If we take an ellipsoid with defining function $r(z,w)= |z|^2+|w|^2-1+\epsilon(zw +\overline{zw})+A(z^2+\overline z^2)+B(w^2+\overline w^2)$, ($A\ne 0, B\ne 0$), neither of the above properties hold.

We prove first the $L^p$  CR Hartogs theorem for a generic ellipsoid; the techniques developed here will be applied to prove the $L^p$ CR Hartogs theorem for a large class of domains. In the case of the ellipsoid, the decomposition of Theorem 1 is exact for polynomials, without approximation. 
\begin{theorem}{\label{B}}
Let $D$ be an ellipsoid with defining function
$r(z,w)=|z|^2+|w|^2 -1+\epsilon(zw+\overline{zw})+az^2 +\overline{az^2}+bw^2 +\overline{bw^2}$,
$(D=\{r<0\})$, with $0<\epsilon <\frac{1}{2}$. If $\epsilon-|a|-|b|>0$, then $D$ satisfies the hypotheses of Theorem 1; consequently, the $L^p$ CR Hartogs theorem holds for $D$. 
\end{theorem}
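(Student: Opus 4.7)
The plan is to verify the hypothesis of Theorem~\ref{A}, namely that every polynomial in $(z,w,\bar z,\bar w)$ admits an exact decomposition on $\partial D$ as a sum of a polynomial in $P[z,\bar z,w]$ and one in $P[z,w,\bar w]$; once this is established, the $L^p$ CR Hartogs conclusion follows from Theorem~\ref{A} with constant approximating sequences. Call a monomial $z^i w^j \bar z^k \bar w^l$ \emph{bad} if both $k\ge 1$ and $l\ge 1$, and \emph{good} otherwise (in which case it already lies in one of the two target rings). It suffices to write every bad monomial as a sum of good monomials on $\partial D$.

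The starting point is the defining relation, rewritten on $\partial D$ as
\begin{equation*}
\epsilon\, \bar z\bar w \;=\; 1 - z\bar z - w\bar w - \epsilon zw - az^2 - \bar a\bar z^2 - bw^2 - \bar b\bar w^2,
\end{equation*}
whose right-hand side consists entirely of good monomials. The argument proceeds by induction on the total antiholomorphic weight $N := k+l$, the base case $N=2$ being the identity above. For the inductive step, multiply the displayed relation by $z^i w^j \bar z^{k-1}\bar w^{l-1}$. Bookkeeping shows that exactly three terms live at weight $N$, namely
\begin{equation*}
\epsilon\, z^i w^j \bar z^k \bar w^l \;+\; \bar a\, z^i w^j \bar z^{k+1}\bar w^{l-1} \;+\; \bar b\, z^i w^j \bar z^{k-1}\bar w^{l+1},
\end{equation*}
while every remaining term has antiholomorphic weight at most $N-1$ and is therefore already a sum of good monomials by the inductive hypothesis.

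Fix $(i,j)$ and set $M_k := z^i w^j \bar z^k \bar w^{N-k}$ for $k = 0,1,\dots,N$; the unknowns are $M_1,\dots,M_{N-1}$, while the endpoints $M_0 = z^i w^j \bar w^N$ and $M_N = z^i w^j \bar z^N$ are good. The $N-1$ identities above form a tridiagonal linear system with coefficient matrix
\begin{equation*}
T \;=\; \begin{pmatrix} \epsilon & \bar a & & & \\ \bar b & \epsilon & \bar a & & \\ & \ddots & \ddots & \ddots & \\ & & \bar b & \epsilon & \bar a \\ & & & \bar b & \epsilon \end{pmatrix},
\end{equation*}
with the boundary contributions $\bar a M_N$ and $\bar b M_0$ absorbed into the right-hand side as good terms. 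The hypothesis $\epsilon - |a| - |b| > 0$ is precisely the strict diagonal dominance condition for $T$, so $T$ is invertible; solving the system expresses each $M_k$ as a sum of good polynomials on $\partial D$, closing the induction.

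The main obstacle, and the key observation, is recognizing this tridiagonal structure at each weight level $N$ and noticing that the diagonal dominance criterion required for invertibility of $T$ matches the hypothesis $\epsilon > |a|+|b|$ exactly. Once this is in place the rest is routine bookkeeping, and the $L^p$ CR Hartogs conclusion is immediate from Theorem~\ref{A}.
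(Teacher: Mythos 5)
Your proof is correct and follows essentially the same route as the paper: induction on the antiholomorphic degree, multiplying the defining relation by the antiholomorphic monomials of degree $N-2$, and inverting the resulting tridiagonal system via the diagonal-dominance bound $\epsilon > |a|+|b|$ (the paper phrases this as the estimate $|A_n v|\ge(\epsilon-|a|-|b|)|v|$, which is the same fact). The only cosmetic difference is that the paper first reduces to pure antiholomorphic monomials $\bar z^k\bar w^l$ by noting that multiplication by holomorphic monomials preserves the decomposition, whereas you carry the factor $z^i w^j$ through the induction.
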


The  following lemma gives  the proof of the theorem for the ellipsoid.

\begin{prop}{\label {prop1}} Given the domain D from Theorem 2, there exists $\delta >0$ such that if $|a|,|b|<\delta$, any polynomial $P(z,w,\overline z, \overline w)$ can be decomposed as a sum $P=Q(z,w,\overline w)+R(z,\overline z, w)$, where $Q$ and $R$ are polynomials.

\end{prop}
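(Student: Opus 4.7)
The approach is to solve the defining equation $r=0$ on $\partial D$ for the ``most mixed'' antiholomorphic product $\bar z\bar w$, obtaining
\begin{equation*}
\bar z\bar w \equiv \frac{1}{\epsilon}\bigl(1-z\bar z-w\bar w-\epsilon zw-az^2-bw^2\bigr)-\frac{\bar a}{\epsilon}\bar z^2-\frac{\bar b}{\epsilon}\bar w^2\pmod{r},
\end{equation*}
and then iteratively eliminate mixed monomials. Call a monomial $z^iw^j\bar z^k\bar w^l$ \emph{mixed} when $k,l\geq 1$. Every term on the right side above lies in $\bf C[z,w,\bar w]$ or $\bf C[z,\bar z,w]$ and is therefore non-mixed, so the goal is to show every polynomial is congruent modulo $r$ to one with no mixed monomials.

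For a mixed monomial $z^iw^j\bar z^k\bar w^l$, factor out $\bar z^{k-1}\bar w^{l-1}$ and substitute the identity above. The total degree $i+j+k+l$ is preserved, and a direct expansion shows that the resulting mixed monomials fall into two groups: those of strictly smaller antiholomorphic degree $k+l-1$ or $k+l-2$, arising from the six summands in parenthesis, and two ``problematic'' terms $-(\bar a/\epsilon)z^iw^j\bar z^{k+1}\bar w^{l-1}$ and $-(\bar b/\epsilon)z^iw^j\bar z^{k-1}\bar w^{l+1}$ of the same antiholomorphic degree $k+l$, arising from $-\bar a\bar z^2/\epsilon$ and $-\bar b\bar w^2/\epsilon$. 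These latter terms merely shift the exponent pair $(k,l)$ along the diagonal $k+l=\text{const}$.

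The argument proceeds by induction on the top antiholomorphic degree $s$ of the mixed part of $P$. At fixed $s$, the space of degree-$s$ mixed antiholomorphic polynomials with $\bf C[z,w]$-coefficients has basis $\{\bar z^k\bar w^{s-k}\}_{k=1}^{s-1}$, and the problematic contributions define the scalar tridiagonal shift operator $(Sc)_k=-(\bar a/\epsilon)c_{k-1}-(\bar b/\epsilon)c_{k+1}$, with boundary convention $c_0=c_s=0$. Its $\ell^\infty$-operator norm is bounded by $(|a|+|b|)/\epsilon$, so choosing any $\delta$ with $2\delta<\epsilon$ (which is compatible with the hypothesis $\epsilon-|a|-|b|>0$) forces $\|S\|<1$ and hence $I-S$ invertible. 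Given $P$ with degree-$s$ mixed part $P_s$, set $\tilde P_s=(I-S)^{-1}P_s$; applying the reduction once gives $\tilde P_s\equiv N(\tilde P_s)+L(\tilde P_s)+S(\tilde P_s)\pmod{r}$, where $N(\tilde P_s)$ collects non-mixed contributions and $L(\tilde P_s)$ collects mixed contributions of strictly lower antiholomorphic degree. Subtracting $S(\tilde P_s)$ yields $P_s=(I-S)\tilde P_s\equiv N(\tilde P_s)+L(\tilde P_s)\pmod{r}$, so the top antiholomorphic degree drops and induction terminates in at most $s$ outer steps, expressing $P$ modulo $r$ in the desired form $Q(z,w,\bar w)+R(z,\bar z,w)$. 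The principal obstacle is bookkeeping the expansion carefully enough to extract $S$ as an honest linear operator and establish its norm bound; once this is done, total-degree preservation keeps the entire procedure inside a finite-dimensional space, so all the inversions and iterations are genuinely algebraic.
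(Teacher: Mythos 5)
Your argument is correct and is essentially the paper's proof in different clothing: your operator $I-S$ on the coefficient vector of the degree-$s$ mixed antiholomorphic monomials is exactly $\frac{1}{\epsilon}A_{s-1}$, the tridiagonal matrix the paper inverts via the same diagonal-dominance bound $|a|+|b|<\epsilon$. The only cosmetic differences are that you run the induction downward on the top mixed antiholomorphic degree of a given polynomial rather than upward monomial-by-monomial, and you use the $\ell^\infty$ operator norm where the paper uses an $\ell^2$ estimate.
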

\begin{proof}
For polynomials $p$ and $q$ (in holomorphic and anti-holomorphic variables) let "$p\equiv q$" mean that $p-q=[h(z,w,\overline w)+g(z,\overline z, w)]|_{\partial D}$  for some  polynomials $h$ and $g$. The lemma holds if  $\overline z^k\overline w^l\equiv 0$ for all $k,l \ge 0$. Note that if $A$ is a matrix with holomorphic coefficients, and if a vector valued function $v$ satisfies $v\equiv 0$, then $Av\equiv 0$. 
One need only show the lemma for monomials $\overline z^k\overline w^l$, $k>0, l>0$. We use induction on the total degree $k+l$. The case $k+l=2$ follows from the form of the defining function $r$, whose terms give the desired splitting for $\overline z\overline w$. Assume the lemma is true for $k+l=n-1$.

Multiply the equation $r=0$ by the $n-1$ antiholomorphic monomials of degree $n-2$:
$\overline z^{n-2}$, $ \overline z^{n-3}\overline w , \dots,\overline w^{n-2}$. By the inductive hypothesis, we need only consider the $n-1$ terms $\overline z^{n-1}\overline w, \dots,\\ \overline z\overline w^{n-1}$. We define $v_{n-1}=(\overline z^{n-1}\overline w, \overline z^{n-2}\overline w^2,\dots, \overline z \overline w^{n-1})^t.$ 

Set $$A_n=\begin{pmatrix} \epsilon & b&0  & \cdots & 0  & 0\\
                    a & \epsilon & b & \cdots &0 &0\\
                    \vdots &\vdots & \vdots &\ddots & \vdots & \vdots\\
                     0 &0  &0 & \cdots & a &\epsilon
                    \end{pmatrix}
                    .$$
                    
In particular $A_1=\epsilon$ and $$A_2=\begin{pmatrix}\epsilon & b\\ a &\epsilon \end{pmatrix}$$
Under the relation $\equiv$, the $n-1$ equations obtained from multiplying $r=0$ by the antiholomorphic monomials of degree $n-2$ can be written as $$A_{n-1}v_{n-1}\equiv 0.$$ 
To complete the proof of Theorem 2, we only need that $A_n$ is invertible; for the next stage, we need the estimates contained in the  following proposition.

\begin{prop}{\label{prop2}} Given $\delta >0$, there exists $\alpha >0$ such that if $|a|,|b|<\alpha$, then $||(A_n)^{-1}||<\frac{1+\delta}{\epsilon}$. The constants $a,b, \epsilon$ are from the defining function; $\alpha$ is independent of $n$. The norm $||.||$ is the operator matrix norm. 
\end{prop}
\begin{proof} Let $z=(z_1,\dots,z_n)^T$. $A_nz=\epsilon z+az^1 +bz^2$, where $z^1$ and $z^2$ are vectors consisting of the entries of $z$, shifted by 1, and with one omission.
From this, we can conclude that $|A_nz|\ge (\epsilon -|a|-|b|)|z|$ from which the lemma follows. 
\end{proof}
Since $A_n$  is invertible for all $n$, we see by induction that all of the holomorphic monomials are $\equiv 0$, which finishes the proof of Proposition 1.

\end{proof}
This ends the proof of Theorem 2.
\vskip .2in

For domains whose defining function has higher degree, analagous reasoning should  give an  exact decomposition of polynomial functions into sums of polynomials, each of which has a holomorphic extension in one of a finite set of directions. However, the approximation criteria of Theorem 1 does not require additional directions, in many cases.  To understand the principle behind the next theorem,  consider a cubic defining function
$\rho(z,w)=|z|^2+|w|^2-1 +\epsilon(zw +\overline{zw}) +az^2+\overline{az^2}+bw^2+\overline{bw^2}+cz^2w+\overline{c z^2w}$.

We have $\overline{zw}\equiv -\frac{c}{\epsilon}\overline {z^2w} $. 
If $\frac{c}{\epsilon}$ is small, we   can conclude  that the right side has a smaller uniform norm than that of $\overline{zw}$ on $\{\rho <0\}$. Following the induction scheme of Theorem 2, we might show that $\overline{zw}$ is in the uniform closure of the holomorphic in z plus holomorphic in w polynomials. In fact, this method works for a strictly convex domains with real analytic boundary, when the defining function is close enough to the defining function of the ellipsoid, in the appropriate sense.

We need an estimate for the norm of the vector $v_n$ defined above. 
\begin{lem}{\label{lemma 1}}
If $|z|^2+|w|^2=1$, then $|v_n|\le 1$.
\end{lem}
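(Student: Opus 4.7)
The plan is to reduce to an elementary one-variable polynomial inequality. Setting $x = |z|^2$ and $y = |w|^2$, the constraint $|z|^2 + |w|^2 = 1$ becomes $x + y = 1$ with $x, y \ge 0$. Continuing the indexing pattern from the definition of $v_{n-1}$, the vector $v_n$ has $n$ components of the form $\overline z^{n+1-k}\overline w^k$ for $k = 1, \ldots, n$, so its squared Euclidean norm is
$$|v_n|^2 = \sum_{k=1}^n x^{n+1-k}\, y^k.$$
Thus the lemma reduces to showing that $\sum_{k=1}^n x^{n+1-k} y^k \le 1$ whenever $x,y \ge 0$ and $x + y = 1$.

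The key step is to dominate this partial sum by the full binomial expansion of $(x+y)^{n+1}$. Since $\binom{n+1}{k} \ge 1$ for every $0 \le k \le n+1$ and each monomial $x^{n+1-k} y^k$ is non-negative, the termwise bound $x^{n+1-k} y^k \le \binom{n+1}{k}\, x^{n+1-k} y^k$ holds. Summing from $k=1$ to $n$ and then adjoining the (non-negative) missing terms for $k=0$ and $k=n+1$ gives
$$|v_n|^2 \;\le\; \sum_{k=0}^{n+1}\binom{n+1}{k}\, x^{n+1-k} y^k \;=\; (x+y)^{n+1} \;=\; 1,$$
from which $|v_n| \le 1$ is immediate.

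There is essentially no obstacle here: the estimate is free once one views the sum as a subset of a binomial expansion of a quantity that is identically $1$ on the sphere $x+y=1$. The only care needed is index bookkeeping, namely that every entry of $v_n$ has strictly positive powers of both $\overline z$ and $\overline w$ with total degree $n+1$, so that the sum to be bounded is a strict sub-sum of the binomial expansion of $(x+y)^{n+1}$.
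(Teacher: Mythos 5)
Your proof is correct, and it takes a genuinely different route from the paper. The paper first enlarges the vector to $u_n=(z^n,z^{n-1}w,\dots,w^n)^t$ (including the pure powers) and then argues by induction on $n$: the base case $n=2$ is done by a trigonometric substitution $|z|=\cos\theta$, $|w|=\sin\theta$, and the inductive step is only sketched (``the norm of $u_n$ is dominated by the norm of some $\tilde u_{n-1}$''). You instead observe that $|v_n|^2=\sum_{k=1}^{n}x^{n+1-k}y^k$ with $x=|z|^2$, $y=|w|^2$, $x+y=1$, is a sub-sum of the binomial expansion of $(x+y)^{n+1}=1$ termwise dominated by it since $\binom{n+1}{k}\ge 1$. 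This is a one-line, induction-free argument that is fully explicit where the paper is terse, and it gives the stronger statement $|u_m|\le 1$ for free by the identical computation. The only point worth flagging is the indexing convention: the paper's definition $v_{n-1}=(\overline z^{n-1}\overline w,\dots,\overline z\,\overline w^{n-1})^t$ means the entries of $v_n$ have total degree $n+1$, which is exactly the convention you adopt, and the argument is insensitive to this choice in any case.
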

\begin{proof}

The lemma  follows from showing that $|u_n|\le 1$, where $u_n=(z^n, z^{n-1}w, ..,w^n)^t$.
We prove this by induction for $n\ge 2$. For  $n=2$ we have that for some $\theta$,
$$|u_n|=\cos^4(\theta)+\sin^2(\theta)\cos^2(\theta)+\sin^4(\theta)=
\cos^2(\theta)+\sin^4(\theta)\le 1.$$
The case for larger $n$ works similarly, where the norm of $u_n$ is dominated by the norm of some $\tilde{u}_{n-1}.$

\end{proof}

Let us now consider a perturbation of the ellipsoid by a power series. We group the terms by the anti-holomorphic degree. 
Write $\phi(z,w)=\Sigma_{n=0}^{\infty}\phi_n(z,w)$,
where $\phi_n$ is a power series of terms with anti-holomorphic degree $n$. We assume that the series for $\phi$ has no linear terms.  We suppose that the power series in $z,w,\overline z, \overline w$ converges for $|z|,|w|, |\overline z|, |\overline w|<r_1$, where $\sqrt{2}r_1<1$ (considering the conjugate arguments in the power series as separate variables) and that $\{\phi <0\}\subseteq \{|z|^2+|w|^2<{r}\}$ with $r<\frac{r_1}{\sqrt 2}$; the lack of linear terms makes this possible, by adjusting the constant. In the proof, one will see  that once the conditions with $r_1$ and $r$  are  satisfied, small enough linear terms can be allowed. This will be  relevant to the proof of Theorem 4.   The number $r$ can be made arbitrarily small, but in this proof we only use that $r<1$.   
We assume that $\phi_2$ is close enough 
$\epsilon\overline{zw} +\overline{az^2}+\overline{bw^2}$ that the matrix $A_n$ corresponding to the $\phi_2$ series satisfies 
$$||A_nv||\ge(1-\delta)||v||$$
for all $v$, with $0<\delta<1/2$, and for $|z|,|w|<r_1$. Write $\phi_1(z,w)=\phi_{10}\overline z+\phi_{11}\overline w$. We also require that $|\phi_{11}(z,w)|<r_2$,  $|\phi_{10}(z,w)|<r_2<1$, $\phi_0(z,w)|<r_2<1$ for $|z|, |w|<r_1$ , where $r_2$ can be made arbitrarily small (although here also we do not use anything more than $r_2<1$ in the proof).  
 We need to redefine "$\equiv$": now 
 $f\equiv g$ means that on $\partial D$, 
 $f-g=P(z,w,\overline w)+Q(z,w,\overline w)$, where $P$ and $Q$ are power series converging in a neighborhood of $\overline D$. 
For each $k\ge 0$ and each $n\ge 3$ we  construct a  $k+1$ by $n+k-1$ matrix $B_{k+1}^{n+k-1}$ whose rows are the coefficients of the degree $n+k+1$ mixed anti-holomorphic monomials obtained by multiplying $-{\phi_n}$ by the $k+1$ anti-holomorphic monomials of degree $k$ (for $k=0$, simply the equation $\phi=0$).
We estimate the norm by the same method used for the $A_n$'s.  $B_{k+1}^{n+k-1}$ has non-zero entries on $n+1$ diagonals only; therefore, we can find  a uniform bound $||B_{k+1}^{n+k-1}||<C_n$ for all $k$, depending only on the maximum of the coefficients of $\phi_n$. By choosing said coefficients small enough, one may arrange that $||B_l^m||<\epsilon^{4(m-l)}$ for $|z|,|w|<r_1$, for $l\ge 1$ and $m\ge 2$. 
\begin{theorem}{\label{C}}
With $\phi$ as above, the $L^p$  CR Hartogs theorem holds on the domain $\{\phi <0\}$ for small enough $\epsilon >0$.
\end{theorem}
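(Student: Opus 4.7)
The plan is to verify the hypothesis of Theorem \ref{A}: for every polynomial $g(z,w,\overline z,\overline w)$ we produce sequences $p_n(z,\overline z, w)$ and $q_n(z,w,\overline w)$ with $\|g-p_n-q_n\|_1\to 0$ on $\partial D$. As in Proposition \ref{prop1}, the problem reduces to showing $\overline z^k\overline w^l\equiv 0$ for all $k,l\ge 1$, where now $\equiv$ is the weaker relation from this section (modulo power series in $(z,w,\overline w)$ plus power series in $(z,\overline z,w)$ converging on a neighborhood of $\overline D$). The induction is on $k+l$, exactly as before: at stage $n$, we look at the vector $v_{n-1}=(\overline z^{n-1}\overline w,\overline z^{n-2}\overline w^2,\dots,\overline z\overline w^{n-1})^t$.

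The new feature is that writing $\phi=0$ on $\partial D$ as $\phi_2+\phi_0+\phi_1+\sum_{n\ge 3}\phi_n=0$ and multiplying by the $n-1$ anti-holomorphic monomials of degree $n-2$ yields a relation of the form
\begin{equation*}
A_{n-1}v_{n-1}\;=\;\sum_{m\ge n}B_{n-1}^{m}\,v_{m}\;+\;(\text{terms removable by the inductive hypothesis and by } \phi_0,\phi_{10},\phi_{11}),
\end{equation*}
where the "removable" part is absorbed into holomorphic-in-$z$ or holomorphic-in-$w$ power series using the smallness of $\phi_0,\phi_1$ together with $r_2<1$, and the sum on the right carries the tail $\phi_3+\phi_4+\cdots$. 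Inverting $A_{n-1}$ (using $\|A_n^{-1}\|\le (1-\delta)^{-1}\le 2$ from the hypothesis), we obtain $v_{n-1}$ as $A_{n-1}^{-1}$ times an infinite sum of higher-degree $v_m$'s. Each such $v_m$ must in turn be re-expressed by reapplying the same procedure, producing an infinite tree of corrections.

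The central estimate driving convergence is Lemma \ref{lemma 1} combined with the matrix bounds $\|B_l^m\|<\epsilon^{4(m-l)}$ for $|z|,|w|<r_1$: at depth $d$ of the expansion tree, each branch carries a factor of at least $\epsilon^{4}$ per jump in total degree, while $|v_m|\le 1$ on $\partial D$ ensures the individual vectors stay bounded. Hence for $\epsilon$ small the full formal sum converges absolutely and uniformly on $\overline D$, giving $v_{n-1}$ as an honest power-series element of the subspace $P(z,w,\overline w)+P(z,\overline z, w)$ (completed in the uniform norm on a neighborhood of $\overline D$). Truncating the expansion tree at depth $N$ and then truncating the resulting power series at polynomial degree $M$ produces the required polynomial sequences $p_n,q_n$.

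The main obstacle is upgrading uniform convergence to $C^1$ convergence on $\partial D$, as Theorem \ref{A} demands. Here the hypothesis that the power series for $\phi$ converges on the strictly larger polydisc $|z|,|w|,|\overline z|,|\overline w|<r_1$ (with $\sqrt 2 r<r_1$) is essential: Cauchy estimates on the annular gap give control of the first derivatives of each $B_l^m v_m$ in terms of $\sup$-norms on the slightly larger set, at the cost of a factor $1/(r_1-\sqrt 2 r)$, which is a fixed constant. Since the geometric factor $\epsilon^{4(m-l)}$ dominates this fixed constant when $\epsilon$ is small, the differentiated series converges uniformly as well, yielding $C^1$ convergence on $\partial D$. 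Once this is in place, Theorem \ref{A} applies and the $L^p$ CR Hartogs theorem follows for $\{\phi<0\}$.
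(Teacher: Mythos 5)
Your overall strategy matches the paper's: verify the $C^1$-approximation hypothesis of Theorem \ref{A} by showing the mixed anti-holomorphic monomials lie in the closure of $P[z,w,\overline w]+P[z,\overline z,w]$, with all estimates carried out on the larger polydisc $|z|,|w|,|\overline z|,|\overline w|<r_1$ so that Cauchy estimates upgrade uniform convergence to $C^1$ convergence on $\partial D$. That last point is exactly how the paper obtains the $C^1$ statement, and your identification of it is correct.

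The gap is in your treatment of the low anti-holomorphic degree terms $\phi_0$ and $\phi_1=\phi_{10}\overline z+\phi_{11}\overline w$. Multiplying $\phi=0$ by the degree-$k$ anti-holomorphic monomials produces, besides $A_{k+1}v_{k+1}$ and the upward tail $\sum_{n\ge k+2}B_{k+1}^n v_n$, the terms $\phi_0 v_{k-1}^{*}$ and $\phi_{10}v_k^{*}+\phi_{11}v_k^{*}$. These are genuinely mixed (a holomorphic coefficient times a mixed anti-holomorphic monomial), so they cannot be ``absorbed into holomorphic-in-$z$ or holomorphic-in-$w$ power series'' as you assert. One could try to discard them in the base equation by the inductive hypothesis, but the same terms reappear every time you re-expand a tail vector: the equation for $v_m$ contains $\phi_0 v_{m-2}^{*}+\phi_1 v_{m-1}^{*}$, and for $m\ge k+2$ the vectors $v_{m-1},v_{m-2}$ include $v_{k+1}$ itself and other not-yet-resolved unknowns. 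Your convergence argument only counts upward degree jumps (each worth a factor $\epsilon^{4}$), so it does not control these downward and self-referential branches of the expansion tree, which gain no degree and carry only the factor $r_2<1$. The paper's proof is organized precisely to eliminate this feedback: it inductively maintains, for every $l\le k$, a purely upward resolved relation $v_l\equiv\sum_{n>l}(\tilde A_l)^{-1}\tilde B_l^n v_n$ with the uniform bounds $\|(\tilde A_l)^{-1}\|\le 2/\epsilon$ and $\|\tilde B_l^n\|\le 2\epsilon^{4(n-l)}$, substitutes these into the new equation to produce perturbed matrices $\tilde A_{k+1},\tilde B_{k+1}^n$ satisfying the same bounds, and only then iterates the now purely upward substitutions, using $\|v_n\|\le r^n$ together with the factor $(4\epsilon^2+\epsilon^4)^{l-1}$ accumulated per iteration to drive the remainder $V_j$ to zero. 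Without this bookkeeping of the perturbed matrices your induction does not close.
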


\begin{proof}
Let us consider the effect of multiplying $\phi=0$  by $\overline z^k$, $\overline z^{k-1}\overline w, \dots,\overline w^k$. 
As before, we want to gather all terms with mixed anti-holomorphic factors of degree $\le k+2$, use an inductive step to write them as having mixed anti-holomorphic degree exactly $k+2$, and then solve. 
We use the notation $()^*$ to denote a vector or matrix which is given as a submatrix of a larger matrix; in this case, 0 rows or entries are added to the matrix or vector so that dimensions in the equation match. Matrix operator norms will not change under the $*$ notation.      We get the vector equation 
\begin{equation}\label{eq4}
\phi_0 v^{*1}_{k-1} 
+\phi_{10}v^{*2}_{k}+\phi_{11}v^{*3}_{k}+A_{k+1}v_{k+1}\equiv 
\Sigma_{n=k+2}^{\infty} B_{k+1}^{n}v_n .
\end{equation}
Next, substitute \begin{equation}v_k\equiv(\tilde {A_k})^{-1}\tilde B_k^{k+1}v_{k+1}+
\Sigma_{n=k+2}^{\infty}(\tilde{A_k})^{-1}\tilde{B_k^n}v_n \end{equation}
and 
\begin{equation}v_{k-1}\equiv (\tilde A_{k-1})^{-1}\tilde B_{k-1}^k v_k+(\tilde A_{k-1})^{-1}\tilde B_{k-1}^{k+1}v_{k+1} +\Sigma_{n=k+2}^{\infty}(\tilde A_{k-1})^{-1}\tilde B_{k-1}^n v_n \end{equation}

\begin{equation}\equiv (\tilde A_{k-1})^{-1}\tilde B_{k-1}^{k}(\tilde A_{k})^{-1}\tilde B_{k}^{k+1}v_{k+1}+(\tilde A_{k-1})^{-1}\tilde B_{k-1}^{k+1}v_{k+1}$$ $$\hskip 1in +\Sigma_{n=k+2}^{\infty}\left( (\tilde A_{k-1})^{-1}\tilde B_{k-1}^n +
(\tilde A_{k-1})^{-1}\tilde B_{k-1}^k(\tilde A_k)^{-1}\tilde B_k^n\right)v_n.\end{equation}

The $\tilde{(\cdot)}$ matrices are the perturbations of the $A$'s and $B$'s defined inductively by substituting as above, and collecting terms. Assume that we have the following inequalities.
$$||(\tilde A_l)^{-1}||\le \frac{2}{\epsilon}, l\le k,$$

$$||(\tilde B_l^n)||\le 2 \epsilon^{4(n-l)}, \forall n >l, \forall l\le k.$$

We want to use induction to show that the same inequalities hold for $k+1$. For $k=0$, the hypotheses of the theorem give the inequality.  In this calculation, we only use that $|\phi_0|<1, |\phi_{10}|<1, |\phi_{11}|<1$. We calculate that the inductive hypothesis gives that the norm of $\tilde B_{k+1}^{n}$ is bounded  by $$\epsilon^{4(n-k-1)} +(\frac{2}{\epsilon})(2\epsilon ^{4(n-k+1)}) + 2(\frac{2}{\epsilon})2\epsilon^{4(n-k)} +(\frac{2}{\epsilon})(2\epsilon^4)(\frac{2}{\epsilon})2\epsilon^{4(n-k)},$$
which is less than $2\epsilon^{4(n-k-1)}$ if $\epsilon$ is small enough.  Similarly,
$$||(\tilde A_{k+1})v||\ge (1-\delta)\epsilon -\left(2(\frac{2}{\epsilon})2\epsilon^4 +(\frac{2}{\epsilon})2\epsilon^8 +(\frac{2}{\epsilon})(2\epsilon ^4)(\frac{2}{\epsilon})(2\epsilon^4)\right)||v|| \ge \frac{\epsilon}{2} ||v||,$$

 for all vectors $v$ if $\epsilon$ is small enough. 
Putting these estimates together, we conclude that for all $k$, there is a vector equation with holomorphic matrix coefficients, expressing $v_{k+1}$ in terms of the $v_l$, $l>k+1$:

\begin{equation}
v_{k+1}\equiv \Sigma_{n=k+2}^{\infty}(\tilde A_{k+1})^{-1}\tilde B_{k+1}^n v_n.
\end{equation}

On the right side, let us substitute the corresponding equation for $v_{k+2}$ and collect terms: 

$$v_{k+1}\equiv \Sigma_{n=k+3}^{\infty}
\left( (\tilde A_{k+1})^{-1}\tilde B_{k+1}^n+
(\tilde A_{k+1})^{-1}\tilde B_{k+1}^{k+2}(\tilde A_{k+2})^{-1}\tilde B_{k+2}^n\right)v_n.$$

The norm of the matrix coefficient of $v_n$ in this expression is bounded by 
$\epsilon^{4(n-k-2)}(16\epsilon^2+4\epsilon^3).$ 
The substitution for $v_{k+3}$ will result in norms bounded by $\epsilon^{4(n-k-3)}(16\epsilon^2+4\epsilon^3)(4\epsilon^2+\epsilon^4)$; in general, after repeating this process $l$ times, one gets matrix coefficients for $v_n$, starting at $n=k+l+2$, with norm  bounded by $\epsilon^{4(n-k-l-1)}(16\epsilon^2+4\epsilon^3)(4\epsilon^2+\epsilon^4)^{l-1}$. Since $||v_n||\le r^n$,
by choosing $l$ large enough, we can write
$v_{k+1}\equiv V_j$, where a geometric series estimate shows that  $||V_j||\rightarrow 0$ as $j\rightarrow \infty$. 
This proves that the components of $v_{k+1}$, which are the degree $k+1$ anti-holomorphic monomials, are in
$\overline{P[z,w,\overline w]+Q(z,w,\overline z)}$ on $\partial D$.
By the choice of constants and the properties of $\phi$, the convergence  of $V_j$ holds for $|z|,|w|,|\overline z|, |\overline w|<r_1$.
A fortiori, this implies $C^1$ convergence  to 0 on $\partial D$; this finishes the proof of the $L^p$ CR Hartogs theorem for $\bf C^2$. 

\end{proof}

In the proof, norms of the holomorphic in $z$ and holomorphic in $w$ terms are not estimated. At each stage, this remainder consists of some terms in a convergent power series, divided by a small but non-zero denominator. This gives the convergence at each stage, which is all that is needed.
Estimating this part precisely may lead to sharper  results in the future. 

As a corollary of our main theorem, we derive a novel expression for the Szeg\H{o} kernel as an iterative limit. First we prove a lemma about intertwining Hilbert space projections.

\begin{lem} Let $H$ be a Hilbert space, with $V\subseteq H$ and $W\subseteq H$ two closed subspaces. Let $\pi_1$ and $\pi_2$ be the associated orthogonal projections, $\pi$ the projection onto $V\cap W=U$. Then for any $v\in V$,  $
lim_{\rightarrow\infty}(\pi_1 \pi_2)^n(v) =\pi(v)$. 
\end{lem}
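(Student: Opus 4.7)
My plan is to reduce the non-self-adjoint operator $T = \pi_1\pi_2$ to the self-adjoint operator $S = \pi_1\pi_2\pi_1$ and then apply the spectral theorem. The key observation is that $S = (\pi_2\pi_1)^*(\pi_2\pi_1)$, so $S$ is self-adjoint with $0 \le S \le I$, and in particular $\|S\| \le 1$. Moreover, for $v \in V$ we have $\pi_1 v = v$, so $S v = \pi_1 \pi_2 v = T v \in V$; a quick induction then gives $S^n v = T^n v$ for all $n \ge 0$ and all $v \in V$. Thus it suffices to show that $S^n v \to \pi v$ strongly on $V$.

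Next I would identify the fixed-point subspace $\ker(I-S)$. If $Sv = v$ then $\|v\| = \|\pi_1\pi_2\pi_1 v\| \le \|\pi_2\pi_1 v\| \le \|\pi_1 v\| \le \|v\|$, so each inequality is an equality. Equality $\|\pi_1 v\| = \|v\|$ forces $v \in V$, and then equality $\|\pi_2 v\| = \|v\|$ (with $v \in V$) forces $v \in W$, so $v \in U$. Conversely, any $v \in U$ is clearly fixed by $S$. Hence $\ker(I-S) = U$, and its orthogonal projection coincides with $\pi$.

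The spectral theorem for the self-adjoint contraction $S$ gives $S = \int_0^1 \lambda\, dE(\lambda)$, hence $S^n = \int_0^1 \lambda^n\, dE(\lambda)$. Since $\lambda^n \to \chi_{\{1\}}(\lambda)$ boundedly on $[0,1]$, dominated convergence in the spectral measure $\langle dE(\lambda)v, v\rangle$ yields $S^n v \to E(\{1\}) v = \pi v$ strongly for every $v \in H$. Combined with the identity $S^n v = (\pi_1\pi_2)^n v$ on $V$, this gives the claim.

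The only delicate point is the identification $\ker(I-S) = V \cap W$, which requires chasing the equality cases in the chain of contractions; everything else is either an algebraic verification or a direct invocation of the spectral theorem. If one prefers to avoid the spectral theorem, an alternative I would consider is a direct argument: the monotonicity $\|T^{n+1}v\|\le \|T^n v\|$ together with the telescoping identity $\|v_n\|^2 - \|v_{n+1}\|^2 = \|v_n - \pi_2 v_n\|^2 + \|\pi_2 v_n - \pi_1\pi_2 v_n\|^2$ forces $v_n - \pi_2 v_n \to 0$, and a weak-subsequence argument combined with $T^* w = w$ for $w \in U$ shows that every weak cluster point of $T^n(v - \pi v)$ vanishes; upgrading this to strong convergence requires a further norm argument, making the spectral-theorem route considerably cleaner.
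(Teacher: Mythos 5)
Your proof is correct and follows essentially the same route as the paper: symmetrize the product of projections to a self-adjoint contraction, identify its fixed space with $V\cap W$, and conclude by the spectral theorem plus dominated convergence. The only (cosmetic) difference is that you use $S=\pi_1\pi_2\pi_1$ and identify $E(\{1\})=\pi$ directly, whereas the paper first quotients by $U$ and works with $T=\pi_2\pi_1\pi_2$; your choice meshes slightly more cleanly with the hypothesis $v\in V$.
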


\begin{proof} The actions of $\pi_1$ and $\pi_2$ both split on $U\oplus U^{\perp}$ with the action on the first factor the identity. Therefore after taking the quotient by $U$, we may assume that $V\cap W=\{0\}$. We analyze a slightly different limit which implies the desired result.
The operator $T=\pi_2\pi_1\pi_2$ is self-adjoint, and $T^n=\pi_2(\pi_1\pi_2)^n$. By the spectral theorem, $T$ is unitarily equivalent to a multiplication operator on a finite measure space $(X,\Omega,d\mu)$. The multiplication operator will be given by a function $f(x)$, $||f||_{\infty}\le 1$ and $\mu(f^{-1}(\{1\}))=0$, since $||T(v)||<1$ for all $v$. In this form, we see by the dominated convergence theorem that 
$T^n(v)\rightarrow 0$ for all v.

\end{proof}

\begin{cor}
Suppose  $D\subseteq \bf C^2$ is a domain satisfying the hypotheses of Theorem 3. Set $H=L^2(\partial D)$, and let $V_1$ (resp. $V_2$) be the subspaces consisting of functions with holomorphic extension on  almost all horizontal (resp. vertical) slices. Let $\pi_i$ be the orthogonal projection of $H$ onto $V_i$. Then 
$(\pi_1 \pi_2)^n\rightarrow S$ in the strong operator topology, where $S$ is the Szeg\H{o} projection. 

\end{cor}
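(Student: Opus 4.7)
The plan is to identify $V_1\cap V_2$ with the Hardy space boundary values and then reduce the corollary to the preceding Hilbert space lemma by a one-step shift argument. By Theorem 3 (CRH $=$ CR), the intersection $V_1\cap V_2$ consists exactly of $L^2$ functions on $\partial D$ whose holomorphic slice extensions assemble to an element of $H^2(D)$; in other words, $V_1\cap V_2 = H^2(\partial D)$. Consequently the orthogonal projection $\pi$ of $H$ onto $V_1\cap V_2$ furnished by the lemma coincides with the Szeg\H{o} projection $S$.

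The lemma, however, is stated only for initial vectors lying in $V$ (with $V,W$ the two subspaces). To upgrade it to strong operator convergence on all of $H$, I would observe that for arbitrary $v\in H$, the vector $w := \pi_2 v$ already lies in $V_2$. Applying the lemma with the roles of $V$ and $W$ (and of $\pi_1$ and $\pi_2$) interchanged yields
\begin{equation*}
(\pi_2\pi_1)^{n-1} w \;\longrightarrow\; \pi w
\end{equation*}
in the norm topology. Since $\pi_1$ is a bounded (indeed contractive) operator, continuity gives
\begin{equation*}
(\pi_1\pi_2)^n v \;=\; \pi_1\bigl[(\pi_2\pi_1)^{n-1}\pi_2 v\bigr] \;\longrightarrow\; \pi_1\pi w.
\end{equation*}

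It remains to check that $\pi_1\pi\pi_2 = \pi$. This is a formal computation: because $\pi v\in V_1\cap V_2$ for every $v$, one has $\pi_1\pi=\pi$ and $\pi_2\pi=\pi$; taking adjoints of the second identity (all three operators are self-adjoint) gives $\pi\pi_2=\pi$, so $\pi_1\pi\pi_2 = \pi_1\pi = \pi = S$. Putting the two displays together yields $(\pi_1\pi_2)^n v \to S v$ for every $v\in H$, which is strong operator convergence.

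The only nontrivial input is the identification $V_1\cap V_2 = H^2(\partial D)$, and this is precisely the content of Theorem 3; everything else is abstract Hilbert space manipulation together with the already-proved lemma. I do not anticipate any serious obstacle once the Hardy space identification is in hand, since the extension from $v\in V$ to $v\in H$ requires only the one-step shift $v \mapsto \pi_2 v\in V_2$ and the self-adjointness of the projections.
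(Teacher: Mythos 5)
Your proposal is correct and follows the same route the paper intends: the corollary is stated immediately after the Hilbert space lemma with no further proof, the implicit argument being exactly your identification $V_1\cap V_2=H^2(\partial D)$ via Theorem 3 so that $\pi=S$, followed by an appeal to the lemma. Your one-step shift $v\mapsto\pi_2 v$ and the identity $\pi_1\pi\pi_2=\pi$ correctly patch the only detail the paper glosses over, namely that the lemma is stated for $v\in V$ while the corollary asserts strong operator convergence on all of $H$.
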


\section{Convex domains in $\bf C^n$}
The case of $\bf C^3$ illustrates what happens in all higher dimensions; therefore, we explain the strategy for $\bf C^3$ and then formulate the conditions for the general theorem in $\bf C^n$.
Let $D\subseteq \bf C^3$ be a smoothly bounded strictly convex domain with smooth defining function $\rho$, and let $f$ be a CR Hartogs functions on $\partial D$.
Let $L_1=\rho_{\overline z_2}\frac{\partial}{\partial \overline z_1}-\rho_{\overline z_1}\frac{\partial}{\partial \overline z_2}$, 
$L_2=\rho_{\overline z_3}\frac{\partial}{\partial \overline z_1}-\rho_{\overline z_1}\frac{\partial}{\partial \overline z_3}$,
$L_3=\rho_{\overline z_3}\frac{\partial}{\partial \overline z_2}-\rho_{\overline z_2}\frac{\partial}{\partial \overline z_3}$. To show that $f$ is CR, it suffices to show that the weak derivative $L_i(f)=0$ on $\partial D$ for two of the $L_i$'s, since the complex tangent space of $\partial D$ is two-dimensional at all points. We modify the condition of Theorem 1 to allow for more variables as follows. The condition for $z_1$ and $z_2$, associated with the operator $L_1$ is given; the other conditions are given by switching variables.

\hskip .3in {\it Every $g\in C^1(\partial D)$ is in 
$\overline{P[z_1,z_2,\overline z_2, z_3,\overline z_3]+P[z_1,\overline z_1,z_2, z_3,
\overline z_3]}$, where the closure is in the $C^1$ norm. }

If two of these conditions hold, then the $L^p$ CR Hartogs theorem holds for $D$. The verification proceeds by power series calculations, as in Theorem 3. We use $A_n$, $B_k^n$, etc., to denote the matrices from the proof of Theorem 2, with respect to two variables; the third is treated as a parameter.


Let us consider an ellipsoid in $\bf C^3$. Now $\epsilon_i$ are small complex numbers. 
The defining function can be written as \begin{equation}\rho(z_1,z_2,z_3)=|z_1|^2+|z_2|^2+|z_3|^2-r+Re(\epsilon_1 z_1z_2 +\epsilon_2 z_1z_3 +\epsilon_3 z_2z_3) +\Sigma_{i=1}^3 Re(a_i z_i^2),
\end{equation}
Write this equation with one of the variables, say $z_3$, treated as a parameter:  $\rho=\rho_0+\rho_1+\rho_2$, where $\rho_i$ is anti-holomorphic of degree $i$ in $z_1$ and $z_2$, 
We need to know that $\rho_0$, $\rho_{10}$, $\rho_{11}$ are small enough that the $\tilde A_n$'s still have inverses with bounded norm. Given any choice of $\epsilon_i$'s, this will occur if $r$ and $a_i$, $i=1,2,3$ are small enough. Thus,   the conditions of Theorem 2 can hold for many power series perturbations of the defining function for an ellipsoid. 
Lemma 1 will also be true: a vector $v_n$ created using two out of three variables with $|z_1|^2+|z_2|^2+|z_3|^2=1$ will also satisfy the hypotheses of Lemma 1. Putting these observations together, we can now state the CR Hartogs theorem for $C^n$.
We use $\alpha, \overline{\beta}$ to denote holomorphic and anti-holomorphic multi-indices. 
Given indices $1\le i<j\le n$, $\tilde{\alpha}, \overline{\tilde{\beta}}$ will denote an index in the variables $z_1$ to $z_n$ where the sum of the holomorpic in $z_i$, $z_j$ degrees and either holomorphic or anti-holomorphic degrees in the remaining variables is $\alpha$; of anti-holomorphic degree $\beta$ in $z_i$ and $z_j$. 
\begin{theorem}
Let $\rho(z)=\Sigma_{|\alpha|\ge 0,|\overline \beta|\ge 0}z^{\alpha}\overline z^{\beta}$, $z=(z_1,\dots,z_n)$ converge for $|z|<r^{\prime}$, with $\{\rho <0\}\subseteq \{|z|<r\}$, $r<r^{\prime}$. Define $L_{ij}=\rho_{\overline z_j}\frac{\partial}{\partial \overline z_i}-\rho_{\overline z_i}\frac{\partial}{\partial \overline z_j}$. Suppose that for  subcollection    $L_{i_1j_1}, \dots, L_{i_{n-1}j_{n-1}}$ which is linearly independent on $\partial D$, the expansions $\rho(z)=\Sigma_{|\tilde\beta|=k=0}^{\infty}\rho_k$ satisfy the estimates  of Theorem 2 in the $z_{i_m}$ and $z_{j_m}$ variables. Then a  function $f\in L^p({\partial D})$, $1\le p\le \infty$ which is CR Hartogs is a $CR$ function whose extension is in the Hardy space $H^p(D)$. 
\end{theorem}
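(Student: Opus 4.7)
The plan is to reduce the theorem to $n-1$ applications of the $\bf C^2$ machinery of Theorems 2 and 3, by treating, for each pair $(i_m,j_m)$ in the chosen subcollection, the remaining $n-2$ coordinates as parameters. First I would observe that showing $L_{i_m j_m}f=0$ weakly on $\partial D$ for all $m=1,\ldots,n-1$ forces $\overline{\partial}_b f=0$ in the distributional sense: the complex tangent space $T^{0,1}(\partial D)$ has rank $n-1$, and the hypothesis that the chosen subcollection is linearly independent on $\partial D$ makes it a pointwise frame. Once $f$ is weakly CR, the extension to a function in $H^p(D)$ follows from standard CR extension theory on strictly convex domains, as in the $\bf C^2$ case.

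For a fixed pair $(i,j)=(i_m,j_m)$ I would reprise the Stokes/duality argument of Section 2. The identity $F=G$ on $\partial D$, with $F$ and $G$ the holomorphic extensions along the $z_i$- and $z_j$-slices of $D$, together with the appropriate form of $dV$, yields the pairing $\int_{\partial D}(L_{ij}f)\,P\,dV=0$ for every polynomial $P$ in the respective generator spaces, via $L^p$ approximation of $f$ by smoother CR-Hartogs boundary values exactly as in Theorem 1. This reduces the theorem to the density claim displayed just before its statement: every polynomial $P(z,\overline z)$ must lie in the $C^1(\partial D)$-closure of $P[z_1,\ldots,z_n,\overline z_j]+P[z_1,\ldots,z_n,\overline z_i]$, where the first summand excludes $\overline z_i$ and the second excludes $\overline z_j$.

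To prove the density claim I would run the induction of Theorem 3 with the $n-2$ extra variables absorbed into the coefficients. Writing $\rho=\sum_{k\ge 0}\rho_k$ where $\rho_k$ collects the terms of total anti-holomorphic degree $k$ in $\overline z_i,\overline z_j$ only, the matrices $A_l$ and $B_k^n$ now have entries that are polynomials in $z$ and in the parameter conjugates $\overline z_l$ for $l\ne i,j$. The hypothesis that the expansions in the $(z_i,z_j)$ variables satisfy the estimates of Theorem 2 is exactly what is required to obtain the uniform bounds $\|\tilde{A}_l^{-1}\|\le 2/\epsilon$ and $\|\tilde{B}_k^n\|\le 2\epsilon^{4(n-k)}$ on $\overline D$. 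Since $|z_i|^2+|z_j|^2\le|z|^2<1$ on $\overline D$, the argument of Lemma 1 still gives $\|v_n\|\le r^n$, and the geometric-series estimate of Theorem 3 closes the induction, yielding the desired $C^1$-convergence on $\partial D$.

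The main obstacle is the uniform control of the matrix norms now that the entries depend on the parameter variables rather than being scalar. One must verify that the Neumann-series bound of Proposition 2 and the perturbation step of Theorem 3 remain stable under the smallness hypothesis on $\rho$, so that $\|\tilde{A}_l^{-1}\|$ and $\|\tilde{B}_k^n\|$ obey their prescribed bounds \emph{uniformly} in the parameter variables on $\overline D$. This is essentially bookkeeping: at each point of $\overline D$ the parameter-dependent matrix is a small perturbation of its ellipsoid prototype, with perturbation size controlled by $r$ and by the higher-order coefficients of $\rho$, both of which can be made arbitrarily small by hypothesis; but the bookkeeping must be arranged so that these perturbations are absorbed into the $\epsilon^{4(n-k)}$ decay before the geometric series is summed.
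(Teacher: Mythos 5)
Your proposal follows essentially the same route as the paper: reduce to the $n-1$ linearly independent tangential operators $L_{i_mj_m}$ spanning the rank-$(n-1)$ complex tangent space, run the duality/weak-derivative argument of Theorem 1 for each pair, and establish the density claim by the power-series induction of Theorem 3 in the two variables $z_{i_m}, z_{j_m}$ with the remaining coordinates absorbed as parameters into the matrix entries, including the extension of Lemma 1 and the slice-wise $H^p$ conclusion. The uniformity-in-parameters issue you flag at the end is precisely the point the paper also leaves to the reader, so there is no substantive divergence.
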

The fact that the extension is in $H^p$ follows from the fact that each a priori extension  which is holomorphic in one of the variables, is in $H^p$ on each slice. 

We also have an asymptotic formula for the Szeg\H{o} projection S. Let $\pi_i$, $i=1,\dots, n$ be the projections onto the $z_i$ holomorphically extendible functions, as in Corollary 1. 
\begin{cor}
Let $D\subseteq \bf {C^n}$ be a domain where the $L^p$ CR Hartogs theorem is true. Set 
$T=\pi_n\pi_{n-1}\dots \pi_1\dots \pi_n$. Then 
$T^k\rightarrow S$ in the strong operator topology. 
\end{cor}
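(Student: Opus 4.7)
The plan is to follow the argument of Corollary 1 with Lemma 2 replaced by its natural $n$-projection analog, applied to the palindromic product $T = \pi_n \pi_{n-1} \cdots \pi_1 \cdots \pi_{n-1}\pi_n$. First I would observe two structural facts. Setting $A = \pi_1 \pi_2 \cdots \pi_n$, one has $T = A^*A$, so $T$ is positive semi-definite with $\|T\| \le 1$; in particular $T$ is self-adjoint with spectrum in $[0,1]$, and the spectral theorem puts $T$ into a multiplication-operator model on a finite measure space.

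The main step is to identify the fixed subspace $\ker(I - T)$ as $U := \bigcap_{i=1}^n V_i$. The inclusion $U \subseteq \ker(I-T)$ is trivial since each $\pi_i$ acts as the identity on $U$. For the converse, $Tv = v$ forces $\|v\| = \|Tv\|$, and by monotonicity of norms along the string every intermediate product has the same norm as $v$. Reading the string from the inside out, the equality $\|\pi_n v\| = \|v\|$ gives $v \in V_n$, hence $\pi_n v = v$; then $\|\pi_{n-1}(\pi_n v)\| = \|\pi_{n-1}v\| = \|v\|$ gives $v \in V_{n-1}$; and so on through $\pi_1$. Thus $v \in U$. By the hypothesis that the $L^2$ CR Hartogs theorem holds, $U = H^2(\partial D)$, so the orthogonal projection onto $U$ is the Szeg\H{o} projection $S$.

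Strong-operator convergence then follows exactly as in Lemma 2. Decomposing $L^2(\partial D) = U \oplus U^\perp$, one has $T^k|_U = I = S|_U$, so it suffices to show $T^k \to 0$ strongly on $U^\perp$. By self-adjointness, $U^\perp$ is $T$-invariant, and by the previous step $T|_{U^\perp}$ has no nonzero fixed vector; in the multiplication-operator model $T|_{U^\perp}$ corresponds to multiplication by some $0 \le f \le 1$ with $\mu(\{f=1\}) = 0$. Since $f^k \to 0$ pointwise off $\{f=1\}$ and $|f^k| \le 1$, dominated convergence gives $T^k v \to 0$ for every $v \in U^\perp$, and combining with $T^k|_U = I$ yields $T^k \to S$ in the strong operator topology.

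The only genuinely new piece compared to Corollary 1 is the telescoping argument identifying $\ker(I-T)$ with $U$; the remaining steps are a cosmetic extension from two projections to $n$. Writing $T$ as $A^*A$ is what makes the palindrome structurally transparent and lets one reuse the spectral-theorem step of Lemma 2 without modification, so I do not foresee any substantive obstacle beyond keeping the indexing through the palindrome straight.
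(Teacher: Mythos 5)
Your proof is correct and follows essentially the route the paper intends: the paper states this corollary without proof, and the only supporting machinery it supplies is Lemma 2, whose argument (self-adjoint palindromic product, spectral theorem as a multiplication operator, dominated convergence off the set where the symbol equals $1$) is exactly what you reproduce for $n$ projections. The one genuinely new ingredient you add --- the telescoping norm-equality argument identifying $\ker(I-T)$ with $\bigcap_i V_i$ --- is the $n$-projection version of a step the paper leaves implicit even in the two-projection case, and you carry it out correctly.
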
 

\section{Variation of the CR Hartogs phenomemon}
Our main theorem, while very general, leaves open the question of whether CRH=CR is generically true. In this section, we use a new method to show that the relation CRH=CR is stable under real analytic variations of a domain and function. Because of the strong conditions on the defining function in Theorems 3 and 4, it is not clear whether a real analytic perturbation of $D$  gives a new domain where CRH=CR. We hope that the method here can be strengthening to show stability of the $L^p$ CR Hartogs theorem. 
We use the integral form of the extension conditions,
\begin{equation}\int_{\partial D} f(z,w)p(z,\overline{z}, w) dw\wedge dz\wedge d\overline{z}=0
\end{equation}
 for all polynomials $p$ in the variables $z$, $\overline z$ and $w$.
For extension on all horizontal slices, the condition is 
\begin{equation}
\int_D f(z,w)q(z,w,\overline{w})dz\wedge dw\wedge d\overline{w}=0
\end{equation}
 for all $q(z,w, \overline{w})$. 
This proof makes no use of exterior algebra; hence, we will write $dH=dz\wedge dw\wedge d\overline{w}$ and $dV=dw\wedge dz\wedge d{\overline z}$. When there are variations, we will write add a $t$ subscript to denote a dependence of the volume form on $t$.

Suppose that $D_t$ is a real analytic variation of  a domain $D_0$. By this we mean that there is a family $D_t$ for $|t|<\delta$ with $\partial D_t$ given as a real analytic graph in the normal bundle of $\partial D_0$ realized as a neighborhood of $\partial D_0$.  Let $U$ be a neighborhood of $\partial D_0$ and suppose that $\cup_t \partial D_t \subseteq U$. Let $f_t=f(t,z,w) \in C^{\omega}(U\times (-a,a))$,  be real analytic in $(z,w,t)$, and  suppose that  the restriction of $f_t$ to $\partial D_t$ is a CR Hartogs function  for all $t$.  This is what we mean by a real analytic variation of CR Hartogs functions.  We are interested in the case where $CRH=CR$ on $D_0$.  Here is our result.

\begin{theorem}\label{E}
Let $f_t$ be a real analytic variation of CR Hartogs functions on the real analytically varying domains $D_t$, $|t|<\delta$. If CRH=CR on $D_0$, 
then every $f_t$ is a CR function on $\partial D_t$.
\end{theorem}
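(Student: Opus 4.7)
My plan is to reduce the theorem to an inductive statement about the Taylor coefficients in $t$ of the two slice-wise holomorphic extensions of $f_t$, invoking the hypothesis CRH$=$CR on $D_0$ at every order. For each $t$ with $|t|<\delta$ let $F_t$ be the function on $\overline{D_t}$ which agrees with $f_t$ on $\partial D_t$ and is holomorphic in $z$ on every horizontal slice of $D_t$, and $G_t$ the corresponding extension holomorphic in $w$ on every vertical slice; both exist because $f_t$ is CR Hartogs. Represented by Cauchy integrals over the moving slice boundaries $\partial(D_t\cap L)$, the functions $F_t$ and $G_t$ depend real analytically on $t$ on compact subsets of $D_0$. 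Moreover $f_t$ is CR on $\partial D_t$ exactly when $F_t=G_t$ on $D_t$, by Hartogs' separate-analyticity theorem applied to the joint function.

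The base case is $k=0$: by hypothesis $f_0$ is CR Hartogs on $\partial D_0$ and CRH$=$CR on $D_0$, so $F_0=G_0$ on $\overline{D_0}$. For the induction, expand $F_t=\sum_{k\ge 0}F_k t^k$ and $G_t=\sum_{k\ge 0}G_k t^k$ on $D_0$; each $F_k$ inherits holomorphy in $z$ on horizontal slices and each $G_k$ holomorphy in $w$ on vertical slices. Fix a real analytic family of diffeomorphisms $\Phi_t\colon\partial D_0\to\partial D_t$ with $\Phi_0=\mathrm{id}$. Since $F_t(\Phi_t(p))=f_t(\Phi_t(p))=G_t(\Phi_t(p))$ for every $p\in\partial D_0$ and every $t$, matching the $k$-th Taylor coefficients via the Fa\`a di Bruno chain rule gives
\begin{equation*}
F_k(p)+\mathcal{R}_k[F_0,\ldots,F_{k-1}](p)=G_k(p)+\mathcal{R}_k[G_0,\ldots,G_{k-1}](p),
\end{equation*}
with the \emph{same} universal differential operator $\mathcal{R}_k$ (whose coefficients are iterated $t$-derivatives of $\Phi_t$ at $0$) on both sides. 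Assuming $F_j=G_j$ for $j<k$, the correction terms coincide and $F_k|_{\partial D_0}=G_k|_{\partial D_0}=:\psi_k$. The boundary datum $\psi_k$ is then a CR Hartogs function on $\partial D_0$, since it admits $F_k$ as a horizontal-slice holomorphic extension and $G_k$ as a vertical-slice one; by the CRH$=$CR hypothesis $\psi_k$ is CR, and uniqueness of the two slice-wise extensions from prescribed boundary data forces $F_k=G_k$ on $D_0$. This closes the induction, and real analyticity in $t$ then yields $F_t=G_t$ on $D_t$ for all $|t|<\delta$, which is the desired CR property of $f_t$.

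The main obstacle I anticipate is justifying the Fa\`a di Bruno identity when $\partial_t\Phi_t|_{t=0}$ has a component transverse to $\partial D_0$, so that $\mathcal{R}_k$ involves directional derivatives of $F_j,G_j$ at boundary points in directions pointing out of $\overline{D_0}$. Because $F_j$ and $G_j$ are a priori defined only on $\overline{D_0}$, one must show that each extends real analytically across $\partial D_0$, so that the inductive equality $F_j=G_j$ on $\overline{D_0}$ propagates to agreement of all derivatives at $\partial D_0$; this should follow from real analyticity of $\partial D_0$ together with the fact that $f_t$ is real analytic on the ambient neighborhood $U\supseteq\partial D_0$, which by Schwarz-type reflection in each slice extends the slice-wise holomorphic $F_t$ and $G_t$ real analytically across the boundary. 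A more robust alternative would be to recast the induction in terms of the integral identities of Section 2, pairing $f_t$ against polynomials $p(z,\overline z,w)$ and $q(z,w,\overline w)$ against the fixed forms $dw\wedge dz\wedge d\overline z$ and $dz\wedge dw\wedge d\overline w$; after differentiating in $t$ and applying Cartan's magic formula together with Stokes on the closed manifold $\partial D_0$, each Taylor coefficient identity reduces to an integral condition over $\partial D_0$ alone, so that CRH$=$CR on $D_0$ can be applied order by order without invoking any transverse derivatives.
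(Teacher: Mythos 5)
Your overall strategy---expand in $t$, apply CRH$=$CR on $D_0$ to each Taylor coefficient, and conclude by real analyticity in $t$---is the same skeleton the paper uses, and your ``more robust alternative'' of pairing against polynomials $p(z,\overline z,w)$, $q(z,w,\overline w)$ in integral form is essentially how the paper actually runs the induction. But there are two genuine gaps, and they sit exactly at the points where the paper has to work hardest. First, your Fa\`a di Bruno step requires differentiating $F_j$ and $G_j$ in directions transverse to $\partial D_0$, i.e.\ it requires these slice-wise extensions to be defined and real analytic in a full two-sided neighborhood of $\partial D_0$, with some uniformity in the slice. Your proposed fix (slice-by-slice Schwarz reflection) gives extension of each slice function past the slice boundary, but with no control on how far: the neighborhood of extension can shrink as the slices approach tangency with $\partial D_0$, and nothing you say rules this out. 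The paper sidesteps this entirely by never differentiating $F_t$ or $G_t$: it subtracts off genuine CR functions $g^0,g^1,\dots$ (each of which, being real analytic and CR on a real analytic hypersurface, extends holomorphically to an honest neighborhood, so its restriction to $\partial D_t$ is well defined), and it runs the induction on the weak integral identities (14)--(15), where only the polynomial test functions and the volume forms get differentiated in $t$.

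Second, and more seriously, your closing sentence ``real analyticity in $t$ then yields $F_t=G_t$'' is doing all the remaining work and is unsupported. Having all $t$-Taylor coefficients of the CR defect vanish at $t=0$ gives nothing unless you exhibit a single object, real analytic in $(z,w,t)$ jointly with a uniform domain of convergence, whose vanishing is equivalent to $f_t$ being CR. Your candidate $F_t-G_t$ would need real analyticity up to $\partial D_t$ (mere real analyticity ``on compact subsets of $D_0$'' does not even make sense uniformly, since the domain moves), and this is precisely the difficulty the paper states it cannot resolve directly (``there does not seem to be an easy way to control the neighborhoods where the $g^i$ extend holomorphically past $\partial D_0$''). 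The paper's substitute is $H(z,w,t)=(I-B_t)f_t$, where $B_t$ is the Bochner--Martinelli projection: $H\equiv 0$ iff $f_t$ is CR, $H=(I-B_t)(F^n_t)$ because $B_t$ reproduces CR functions, so $H$ vanishes to infinite order in $t$; the real analyticity of $B_t(f_t)$ in $(z,w,t)$ is then established by a separate, fairly long argument (the averaging of the Cauchy transform over lines, together with Lemmas 4 and 5 on real analytic variation of the Cauchy transform under real analytic deformations of curves). Without this---or an equivalent uniform real analyticity statement for $F_t-G_t$---your induction proves only that the defect is $O(t^n)$ for every $n$, not that it vanishes.
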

By assumption,  every $\partial D_t$ can be written as a graph over $\partial D_0$ in the normal coordinate. 
Let $\iota_t:\partial D_0 \rightarrow \partial D_t$ be the function which sends a point $x\in \partial D_0$ to the point in $D_t$ lying in the normal through $x$. We can use $\iota_t$ to pull back the CR Hartogs data to $\partial D_0$. Applying this to (10) and (11), we get 
\begin{equation}\label{3}
\int_{\partial D}f(\iota_t(z,w))p(\iota_t(z,\overline{z},w)) \iota_t^*(dV|_{\partial D_t})=0
\end{equation}

and \begin{equation}\label{4} 
\int_{\partial D}f(\iota_t(z,w))q(\iota_t(z,w, \overline w))\iota_t^*(dH|_{\partial D_t})=0
\end{equation}
Writing $f_t(z,w)=f(\iota_t(z,w))$, $p_t(z,\overline z, w)=p(\iota_t(z,\overline{z},w))$, $q_t(z,w,\overline w)=q(\iota_t(z,w,\overline w))$,\\
$dV_t=\iota_t^*(dV|_{\partial D_t})$, $dH_t=\iota_t^*(dH|_{\partial D_t})$,  we get the abbreviated form
\begin{equation}\label{5}
\int_{\partial D}f_t(z,w){p_t}(z,\overline z, w) dV_t=0
\end{equation}
and 
\begin{equation}\label{6}
\int_{\partial D}f_t(z,w)q_t(z, w,\overline w) dH_t=0
\end{equation}

Differentiating (14) and (15) repeatedly with respect to $t$ and setting $t=0$, we get an infinite sequence of equations for each polynomial $p$ and $q$. These equations become extremely complicated quite quickly, but by utilizing the fact that $CRH=CR$ for $D_0$, we can effect a significant simplication.
Note that $f_0$ is a $CR$ function on $\partial D_0$. Being real analytic and $CR$, it has a holomorphic extension to a neighborhood $U_1\subseteq U$ of $\partial D_0$. Let $g^0$ be this function, and let $g^0_t$ be the pullback of $g^0|\partial D_t$ to $\partial D_0$ by the normal projection. 

Let $F^0_t=f_t-g^0_t$. $F^0_t$ is CRH for all $t$. Taking the $t$ derivative of (14) and (15) (using dot notation) and plugging in $t=0$, and using $F^0_0(z,w)\equiv 0$ we get

\begin{equation}\label{7}
\int_{\partial D}\dot{F^0}|_{t=0}(z,w)p(z,\overline z, w) dV=0
\end{equation} for all polynomials $p(z,\overline z, w)$
and 
\begin{equation}
\int_{\partial D}\dot{F^0}|_{t=0}(z,w)q(z,w, \overline w) dH=0
\end{equation}
for all polynomials $q(z,w,\overline w)$.
Because of the assumption about $D_0$, this gives that $\dot{F^0}|_{t=0}$ is a $CR$ function; denote this function by $g^1$, and extend it to  a neighborhood $U_2\subseteq U_1$ of $\partial D_t$; set $F^1_t=F^0_t-tg^1_t$.
Then $F^0_0=\dot{F^1}|_{t=0}=0$ on $\partial D$. Taking two derivatives of (14) and (15) and setting $t=0$, we conclude that $\dot{\dot{F^1}}|_{t=0}$ is a $CR$ function.
Continuing this process, we arrive at the following result. We continue with the convention that a variation $h_t$ is considered as a function on $\partial D_0$, via the normal pullback. 
\begin{lem} Let $f_t$ be the pull back to $\partial D_0$ of a real analytic variation of CR Hartogs functions. Then for any n, there exist $g^0$, $g^1,\dots, g^n$ which are real analytic $CR$ functions on $\partial D_0$ and there exists $\epsilon>0$ such that $g^i_t$ is defined for $|t|<\epsilon$ and 
\begin{equation}
F^n_t=f_t-g^0_t-tg^1_t\dots-\frac{t^ng^n_t}{n!}
\end{equation} vanishes to order $n$ in $t$ at $t=0$ for $(z,w)\in \partial D_0$.

\end{lem}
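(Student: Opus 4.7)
The plan is induction on $n$, following the sketch the author gives just before the lemma statement. For the base case $n=0$, the function $f_0(z,w):=f(z,w,0)$ is CR Hartogs on $\partial D_0$ by hypothesis; since $\mathrm{CRH}=\mathrm{CR}$ on $D_0$, it is CR. Being also real analytic, $f_0$ extends holomorphically across $\partial D_0$ to a two-sided neighborhood $U_1\subseteq U$. Call this extension $g^0$; for $|t|$ small enough that $\partial D_t\subseteq U_1$, the normal pullback $g^0_t$ is defined, and $F^0_t:=f_t-g^0_t$ vanishes identically on $\partial D_0$ at $t=0$.

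Assume inductively that $g^0,\dots,g^n$ have been built so that $F^n_t$ vanishes to order $n$ in $t$ at $t=0$. Each $g^i$ is real analytic and CR on $\partial D_0$, so by $\mathrm{CRH}=\mathrm{CR}$ (combined with one-sided real analytic extension across $\partial D_0$) it extends to a holomorphic function on $D_0\cup U_i$; in particular, for $|t|$ small $g^i$ is holomorphic on all of $D_t$, and hence $g^i|_{\partial D_t}$ is CR Hartogs on $\partial D_t$. Therefore $g^i_t$ satisfies the pulled-back relations (14) and (15), and subtracting these from the corresponding relations for $f_t$ yields
\begin{equation*}
\int_{\partial D_0} F^n_t\, p_t(z,\overline z, w)\, dV_t = 0
\qquad\text{and}\qquad
\int_{\partial D_0} F^n_t\, q_t(z,w,\overline w)\, dH_t = 0
\end{equation*}
for every polynomial $p$ and $q$.

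Now differentiate these identities $n+1$ times in $t$ and set $t=0$. The Leibniz expansion is justified by the real analytic dependence of $p_t$, $q_t$, $dV_t$, $dH_t$ on $t$; the inductive hypothesis $\partial_t^k F^n_t|_{t=0}=0$ for $k\le n$ kills every Leibniz term except $k=n+1$, and since $\iota_0=\mathrm{id}$ the surviving equation reads
\begin{equation*}
\int_{\partial D_0}\bigl(\partial_t^{n+1}F^n_t\bigr)\big|_{t=0}\, p(z,\overline z, w)\, dV = 0,
\end{equation*}
together with the analogous identity involving $q\,dH$, for every polynomial. Hence $\partial_t^{n+1}F^n_t|_{t=0}$ is CR Hartogs on $\partial D_0$; by $\mathrm{CRH}=\mathrm{CR}$ it is CR; and being real analytic it extends holomorphically to a neighborhood $U_{n+1}\subseteq U_n$. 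Define $g^{n+1}$ to be this extension and set $F^{n+1}_t:=F^n_t-t^{n+1}g^{n+1}_t/(n+1)!$; a direct computation using the Taylor expansion of $F^n_t$ and the vanishing of $t^{n+1}g^{n+1}_t$ to order $n+1$ shows that $F^{n+1}_t$ vanishes to order $n+1$ at $t=0$, closing the induction.

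The main obstacle is the bookkeeping around the pullback $\iota_t$: one must verify that $p_t$, $q_t$, $dV_t$, $dH_t$ depend real analytically on $t$ in a strong enough sense to differentiate under the integral sign (which follows from the real analyticity of $\iota_t$ and of the defining function of $\partial D_t$), and that the neighborhoods $U_i$ and the admissible radii in $t$ shrink in a controlled way at each step. Since only finitely many ($n+1$) shrinkings occur for a fixed $n$, a single $\epsilon>0$ suffices, exactly as the lemma asserts.
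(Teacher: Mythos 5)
Your proposal is correct and follows essentially the same route as the paper: the paper also subtracts successive holomorphic extensions $g^0, g^1,\dots$, uses that each $F^i_t$ remains CR Hartogs (so the pulled-back moment conditions (14)--(15) still hold), differentiates those identities in $t$ at $t=0$ so that the inductive vanishing kills all lower-order Leibniz terms, and invokes CRH=CR on $D_0$ to identify the top derivative as a CR function. Your write-up merely formalizes as an explicit induction what the paper presents as the first two steps followed by ``continuing this process.''
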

At this point, one would like to say that $f$ has a power series expansion in functions which are $CR$ in a a fixed neighborhood of $\partial D_0$; however, there does not seem to be an easy way to control the neighborhoods where the $g^i$ extend holomorphically past $\partial D_0$.
Instead, we use the variation of the Bochner-Martinelli projection to show that $f$ is $CR$ on each $\partial D_t$, for small enough $t$.
\begin{claim} Let $B_t$ be the pullback of the Bochner-Martinelli projection on $\partial D_t$. Then for any real analytic variation  $f_t$(not necessarily CR Hartogs) there exists $\delta >0$ such that  $B_t(f_t)$ is a real analytic  function of $(z,w,t)$ for  $(z,w)\in \partial D_0$, $|t|<\delta$.
\end{claim}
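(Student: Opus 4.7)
The plan is to express $B_t(f_t)$ as a parametric integral on the fixed boundary $\partial D_0$ and exploit real analyticity of the kernel and the data, combined with a contour deformation in complexified coordinates, to control the boundary behavior.

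First I would write the Bochner-Martinelli projection as the nontangential boundary value (from the interior) of the BM integral
\[
B_t(f_t)(z_0) \;=\; \lim_{z \to z_0,\ z \in D_t} u_t^{+}(z), \qquad u_t^{+}(z) \;=\; \int_{\partial D_t} f_t(\zeta)\, K(\zeta, z),
\]
where $K$ is the Bochner--Martinelli kernel, rational and singular only on the diagonal $\{\zeta = z\}$. Pulling back via $\iota_t : \partial D_0 \to \partial D_t$ rewrites the integral on the fixed boundary $\partial D_0$, with integrand depending real analytically on $(z, t, \eta)$ away from $\{\eta = z\}$. For $(z,t)$ such that $\iota_t(z)$ is bounded away from $\partial D_t$, $u_t^{+}(z)$ is manifestly real analytic in $(z,t)$, so the substantive issue is to extend $u_t^{+}$ real analytically across $\partial D_t$ to a full neighborhood of each boundary point.

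To carry this out I would complexify. The real analytic hypersurface $\partial D_0$ and the diffeomorphism $\iota_t$ have holomorphic extensions once the conjugate coordinates $\overline{z}_i$ are treated as independent complex variables; $f_t$ likewise extends holomorphically for $|t|$ small, and the kernel $K$ is already holomorphic in the complexified variables off the diagonal. Fix $z_0 \in \partial D_0$. In a small complex neighborhood of $z_0$ I would deform the real three-cycle $\partial D_0$ to a nearby complex three-chain that stays to one side of the complex diagonal $\{\eta = z\}$. Since the BM integrand is closed off the diagonal, Stokes's theorem shows the integral is unchanged. On the deformed chain, however, the integrand is holomorphic in $(z, t)$ throughout a full complex neighborhood of $(z_0, 0)$, so the integral defines a real analytic function there. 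Applying the Plemelj jump relation $u_t^{+} - u_t^{-} = f_t$ on $\partial D_t$, whose right-hand side is itself real analytic, transfers the conclusion from the one-sided values of $u_t^{+}$ to the actual boundary value $B_t(f_t)$.

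The main obstacle will be choosing the complex deformation uniformly in $(z, t)$ and verifying that no unwanted residue is picked up when $z$ crosses $\partial D_t$: the deformed chain must lie inside the common domain of holomorphy of $f$, $\iota_t$, and $K$, and the complex neighborhood of $z_0$ must be small enough to tolerate perturbations of $z$ and $t$ yet large enough for the deformation to be genuinely one-sided. This is a parametric edge-of-the-wedge type argument. Once it is in place, the pullback of $B_t(f_t)$ via $\iota_t$ is real analytic in $(z, w, t)$ on a set of the form $\partial D_0 \times (-\delta, \delta)$ for some $\delta > 0$, which is exactly the claim.
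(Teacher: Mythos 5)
Your strategy---complexify the Bochner--Martinelli integral, deform the cycle off the singularity, and recover the boundary value from the jump relation---is a genuinely different route from the paper's, and it is a recognized way to prove real analyticity of layer-potential-type operators. But as written it has a gap precisely at the step you defer to the end. After complexification (treating $\overline{\zeta}$ and $\overline{z}$ as independent variables $\zeta^*$, $z^*$), the Bochner--Martinelli kernel is singular not on the point diagonal $\{\zeta=z\}$ but on the complex quadric $\{\langle\zeta-z,\zeta^*-z^*\rangle=0\}$, a complex hypersurface in $\mathbf{C}^{2n}$. There is no ``one side'' of a complex hypersurface in $\mathbf{C}^{2n}$, so the picture of sliding the real $(2n-1)$-cycle to a chain that ``stays to one side of the complex diagonal'' does not directly make sense; moreover a dimension count shows a $(2n-1)$-cycle generically meets such a hypersurface in a set of real dimension $2n-3\ge 1$ for $n\ge 2$, so a generic perturbation does not suffice. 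The deformation must exploit the positivity $|\zeta-z|^2\ge 0$ on the real points and the conormal geometry of the singular set, uniformly in the parameters $(z,w,t)$. That construction is the entire content of the claim, and naming it as ``the main obstacle'' without carrying it out leaves the proof incomplete. Separately, if your deformed integral really does exhibit $u_t^{+}$ as real analytic across $\partial D_t$, the Plemelj step is redundant; if it instead produces the exterior value $u_t^{-}$, you need to say so and to justify real analyticity of $u_t^{-}$ up to the boundary from outside, which is the same deformation problem again.

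For comparison, the paper sidesteps the multidimensional deformation entirely by writing the Bochner--Martinelli transform as an average over the pencil of complex lines through the point of one-variable Cauchy transforms on the slices $L\cap\partial D_t$. On each slice the complexified Cauchy kernel is singular only at the point $\zeta=z$, so the contour deformation is the elementary one-variable one (push $|z|=1$ out to $|z|=\frac{1+r_2}{2}$, as in the paper's Lemma 4). The price is that one must control real analytic dependence on the slice, including slices degenerating toward the complex tangent line; the paper handles this with a dilation normalizing the near-tangential slices and the classical fact that Riemann maps vary real analytically with real analytic deformations of the curve (Lemma 5). If you want to salvage your approach, the cleanest fix is probably to adopt exactly this slicing reduction, since it converts your problematic $(2n-1)$-dimensional cycle deformation into a family of one-dimensional ones where ``going around the singularity'' is literal.
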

The Bochner-Martinelli kernel, while not holomorphic, does have the property that the boundary operator reproduces exactly the CR functions. This is proved in, for example \cite{KyAi}. 

\begin{theorem}
Let $D$ be a domain with $C^2$ boundary, and let $B$ denote the Bochner-Martinelli operator on $\partial D$.
Then for a continuous function $f$, $Bf=f$ if and only if $f$ is CR.

\end{theorem}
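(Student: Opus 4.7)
I would prove the theorem in two directions using the Bochner--Martinelli integral together with its jump relations. First set up notation: the BM kernel $K(\zeta,z)$ is an $(n,n-1)$-form in $\zeta$, $\bar\partial_\zeta$-closed off the diagonal, and for $F\in C^1(\overline D)$ one has the representation
\begin{equation*}
\int_{\partial D} F\, K(\zeta,z) \;-\; \int_D \bar\partial F \wedge K(\zeta,z) \;=\; \chi_D(z)\, F(z).
\end{equation*}
For continuous $f$ on $\partial D$, write $B^+ f$ (respectively $B^- f$) for the boundary integral evaluated at $z\in D$ (respectively $z\in {\bf C}^n\setminus \overline D$). Both are harmonic in $z$; $B^- f$ vanishes at infinity; and the Plemelj--Sokhotski jump formula $B^+ f - B^- f = f$ holds non-tangentially on $\partial D$ thanks to the $C^2$ regularity of the boundary. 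I interpret the operator $B$ in the statement as $Bf := B^+ f|_{\partial D}$.

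The direction $f\in$ CR $\Rightarrow Bf = f$ is immediate from Hartogs--Bochner. Under the natural topological hypothesis on $D$ (and $n \ge 2$), a continuous CR function $f$ extends to a function $F$ holomorphic on $D$ and continuous on $\overline D$; applying the BM reproducing formula to $F$ gives $B^+ f(z) = F(z)$ throughout $D$, and taking boundary values yields $Bf = f$.

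For the converse, assume $Bf = f$. Then the non-tangential boundary value of $B^- f$ equals $0$ by the jump formula. Since $B^- f$ is harmonic on ${\bf C}^n \setminus \overline D$, vanishes at infinity, and has zero boundary value, the maximum principle for harmonic functions forces $B^- f \equiv 0$ in the exterior. The final step is to deduce from this global vanishing that $f$ is CR. I would do this by duality: the family $\{K(\,\cdot\,,z) : z \notin \overline D\}$ consists of smooth $\bar\partial_\zeta$-closed $(n,n-1)$-forms on a neighborhood of $\overline D$, and a Runge-type density theorem identifies its closed span with a dense subspace of all smooth $\bar\partial$-closed $(n,n-1)$-forms on neighborhoods of $\overline D$, in the $C^0(\partial D)$ topology. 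The hypothesis $B^- f \equiv 0$ then becomes $\int_{\partial D} f\,\phi = 0$ for every such $\phi$, which is the standard dual characterization of the weak equation $\bar\partial_b f = 0$; hence $f$ is CR.

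The main obstacle is the Runge-type density step. One can attack it via Hahn--Banach: a continuous linear functional on the space of smooth $\bar\partial$-closed $(n,n-1)$-forms that annihilates $K(\cdot,z)$ for every $z$ outside $\overline D$ must, by analyticity of the BM kernel in $z$ together with uniqueness of harmonic continuation, annihilate every closed form. A cleaner alternative is to approximate an arbitrary smooth closed form directly by integral combinations of BM kernels, using the fundamental-solution properties of $K$ on a slightly larger pseudoconvex neighborhood of $\overline D$. Either route is where the real work lies; once the approximation is in hand, the duality step closes the proof.
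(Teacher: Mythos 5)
First, a point of reference: the paper does not prove this theorem at all --- it is quoted as a known result of Kytmanov and A\u{\i}zenberg \cite{KyAi} and used as a black box in Section 5. So your argument has to stand on its own, and it does not quite do so. The forward direction is fine for $n\ge 2$ (Hartogs--Bochner gives $F\in \mathcal{O}(D)\cap C(\overline D)$, and the reproducing formula for such $F$ follows by exhausting $D$ from inside), and your reduction of the converse to $B^-f\equiv 0$ in the exterior is correct, provided one reads the hypothesis $Bf=f$ as ``the interior Bochner--Martinelli integral extends continuously to $\overline D$ with boundary value $f$'': for merely continuous $f$ the principal-value integral on $\partial D$ need not converge, and only the jump $B^+f-B^-f=f$ is guaranteed, not the two one-sided limits separately. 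The genuine gap is the final step. You need that orthogonality of $f$ to the family $\{K(\cdot,z):z\notin\overline D\}$ implies orthogonality to all $\bar\partial$-exact $(n,n-1)$-forms (which is what ``$f$ is CR'' means weakly), and you correctly flag this Runge-type density claim as ``where the real work lies'' --- but the Hahn--Banach sketch you offer does not close it. A functional $\mu$ on $C(\partial D)$ annihilating $K(\cdot,z)$ for all exterior $z$ only tells you that the harmonic function $z\mapsto\langle\mu,K(\cdot,z)\rangle$ vanishes on the unbounded component of the complement of $\partial D$; unique continuation propagates this to nothing new, and no conclusion about arbitrary $\bar\partial$-closed forms follows. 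As written, the density statement you need is essentially equivalent in difficulty to the theorem itself, so the converse is not proved. (There is also a smaller issue: testing against all $\bar\partial$-closed $(n,n-1)$-forms on a neighborhood of $\overline D$ is a priori stronger than testing against exact ones unless $H^{n,n-1}$ of that neighborhood vanishes, so you would at least need to work on a pseudoconvex neighborhood such as all of ${\bf C}^n$.)

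The classical argument of \cite{KyAi} avoids any density theorem: one shows directly that $F^+=B^+f$ is holomorphic in $D$ once $F^-\equiv 0$, using the harmonicity of the Bochner--Martinelli integral off $\partial D$ together with the fact that the jump of its anti-holomorphic first derivatives across $\partial D$ is controlled by the tangential CR operator applied to the density; with $F^-\equiv 0$ this forces the harmonic functions $\partial F^+/\partial\overline z_j$ to vanish, so $F^+$ is holomorphic and $f$ is its continuous boundary value, hence CR. I would either import that lemma explicitly or restructure your converse around it rather than around the unproven approximation step.
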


Assuming the claim, we can prove that the CR Hartogs variation  is in fact a CR variation. 

Let $H(z,w,t)=(I-B_t)f_t$. We have that $H=(I-B_t)(F^n_t)$ for all $n$, because the difference is a CR variation. Since $F_n$ vanishes to order $n$ at $t=0$ on $\partial D$, we have that 
$\frac{d^k}{dt^k}|_{t=0}H(z,w,t)\equiv 0$ for $k=0, 1,\dots,n$.  Letting $n\rightarrow \infty$, we get that $H\equiv 0$---i.e., the CR Hartogs variation is actually a variation of CR functions. 

Now we proceed to the proof of the claim.
We use the expression of the Bochner-Martinelli kernel as the average of the Cauchy transform on lines; see e.g. \cite{Kyt}. Let $K(z,w)$ denote the Bochner-Martinelli kernel.  For a point $z\in D$, 
\begin{equation}
\int_{\partial D} f(z)K(z,w)=\int_{{\bf CP^1}_z}\left( \int_{L\cap \partial D}C_L(f)\right),
\end{equation}
where $C_L$ is the Cauchy integral on $L\cap \partial D$, and ${\bf CP^1}_z$ is the set of complex lines through $z$. The Bochner-Martinelli operator on the boundary is a singular integral operator.  Real analyticity allows one to sidestep issues of the principal value of a Cauchy transform, as shown  below.  

We need some facts on real analyticity of the Cauchy transform.
\begin{lem}
Let $f(z,t)$ be real analytic on $(|z|=1)\times(|t|<\delta)$, where $t\in \bf R^n$. Let $Cf$ be the Cauchy transform with respect to $z$. Then for any $\delta_1<\delta$, there exists  $r>1$ such that  $Cf(z,t)$ is real analytic on 
$(|z|<r)\times (|t|<\delta_1)$.
\end{lem}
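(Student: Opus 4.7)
The plan is to exploit the real analyticity of $f$ in both $z$ and $t$ to extend $f$ itself off the circle $|z|=1$ as a holomorphic function of $z$ in an annular neighborhood, and then deform the contour in the definition of the Cauchy transform.

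More concretely, the first step is to use the assumed real analyticity of $f$ on the compact set $(|z|=1)\times\{|t|\le\delta_1\}$ to produce $r_1<1<r_2$ and a complex neighborhood $V\subseteq\mathbf{C}^n$ of the real ball $\{|t|\le\delta_1\}$ such that $f$ extends to a holomorphic function $\tilde f(\zeta,t)$ on the polydomain $\{r_1<|\zeta|<r_2\}\times V$. (On a compact set real analyticity gives a uniform complexification radius; this is where $\delta_1<\delta$ is used, since the radius of the complex extension shrinks as $|t|\to\delta$.)

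The second step is contour deformation. For $|z|<1$, by definition,
\begin{equation*}
Cf(z,t)=\frac{1}{2\pi i}\int_{|\zeta|=1}\frac{f(\zeta,t)}{\zeta-z}\,d\zeta.
\end{equation*}
Fix any $r$ with $1<r<r_2$. For $|z|<1<r$ the pole $\zeta=z$ lies inside both circles $|\zeta|=1$ and $|\zeta|=r$, so the integrand $\tilde f(\zeta,t)/(\zeta-z)$ is holomorphic in $\zeta$ on the annulus $1\le|\zeta|\le r$, and Cauchy's theorem gives
\begin{equation*}
Cf(z,t)=\frac{1}{2\pi i}\int_{|\zeta|=r}\frac{\tilde f(\zeta,t)}{\zeta-z}\,d\zeta.
\end{equation*}
The right-hand side, however, makes sense not just for $|z|<1$ but for all $|z|<r$ and all $t\in V$, and it is manifestly holomorphic jointly in $(z,t)$ on $\{|z|<r\}\times V$ (differentiation under the integral sign is justified since the integrand is smooth on the compact contour and holomorphic in parameters). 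In particular, restricted to real $t$, it defines a real analytic extension of $Cf$ to $\{|z|<r\}\times\{|t|<\delta_1\}$, which is the claim.

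I do not expect any serious obstacle: the only thing to watch is that the complexification radius in the $t$-variable is uniform in $\zeta\in\{|\zeta|=r\}$, which is guaranteed by compactness of the contour together with uniform real analyticity on $(|z|=1)\times\{|t|\le\delta_1\}$. The estimate $1<r<r_2$ coming from the $z$-annulus of holomorphic extension is exactly the $r>1$ asserted in the lemma.
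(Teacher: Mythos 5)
Your proof is correct and follows essentially the same route as the paper: both use compactness to extend $f$ holomorphically in $z$ to an annulus $r_1<|\zeta|<r_2$ uniformly for $|t|\le\delta_1$, and then rewrite the Cauchy transform as a Cauchy integral over a larger circle (the paper takes $|\zeta|=\tfrac{1+r_2}{2}$, you take any $r$ with $1<r<r_2$), from which joint real analyticity on $\{|z|<r\}\times\{|t|<\delta_1\}$ is immediate.
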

\begin{proof}
Given $\delta_1$, we  can use compactness to find  $r_1<1<r_2$ so that $f$ can be extended to a holomorphic in $z$ function on $(r_1<|z|<r)\times(|t|<\delta_1)$. Expressing the Cauchy transform on $|z|=1$ in terms of the  Cauchy integral on $|z|=\frac{1+r_2}{2}$ gives the desired real analyticity. 
\end{proof}

It is  classical  that the Riemann map varies real analytically with a real analytic variation of a domain; see \cite{BG} for a recent paper which includes a proof of this fact.
\begin{lem} Let $\phi(z,\tau):(|z|<r)\times (|\tau|<\delta_1)\rightarrow \bf C$ ($r>1$) be a real analytic function which for fixed $\tau$ is holomorphic, and a conformal map on $|z|\le 1$. Let $U\subseteq \bf C$ be a neighborhood of 
$\overline{\phi((|z|<r)\times(|\tau|<\delta))}$, and suppose $f(z,\eta)$ is a real analytic function on $U\times(|\eta|<\delta_2)$. For each $\tau$, let $C_{\tau}(f)(z)$ be the Cauchy transform of $f$ on $\phi(\cdot,\tau)(|z|=1)=\gamma_{\tau}$.
Then   $C_{\tau}(f)$ is a real analytic function of $(z, \tau,\eta)$ on $(|z|=1)\times(|\tau|<\delta_1)\times (|\eta|<\delta_2)$.
\end{lem}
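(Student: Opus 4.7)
The plan is to pull the Cauchy integral back to the fixed unit circle via $\phi$, exploit the real analyticity of $f$ to rewrite the integrand so that it becomes \emph{holomorphic} in the integration variable $w$ on an annular neighborhood of $|w|=1$, and then replace the principal value by an average of two nearby contour integrals that avoid the singularity altogether.

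First, parametrize $\gamma_\tau$ by $w\mapsto\phi(w,\tau)$ with $|w|=1$ and write the point of evaluation as $\phi(z,\tau)$ with $|z|=1$. After changing variables, the Cauchy transform reads
\begin{equation*}
C_\tau(f)(\phi(z,\tau))=\frac{1}{2\pi i}\,\mathrm{p.v.}\int_{|w|=1}\frac{f(\phi(w,\tau),\eta)\,\phi_w(w,\tau)}{\phi(w,\tau)-\phi(z,\tau)}\,dw.
\end{equation*}
Because $f(\zeta,\eta)$ is real analytic on $U\times(|\eta|<\delta_2)$, it complexifies to a function $\tilde f(\zeta,\overline\zeta,\eta)$ holomorphic in $(\zeta,\overline\zeta)$ on a neighborhood of the totally real slice. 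On $|w|=1$ the identity $\overline w=1/w$ gives $\overline{\phi(w,\tau)}=\overline{\phi}(1/w,\overline\tau)$, where $\overline\phi$ denotes $\phi$ with conjugated Taylor coefficients. Substituting this into $\tilde f$ produces an integrand that is \emph{holomorphic} in $w$ on an annulus $1-\alpha<|w|<1+\alpha$ (shrinking $\alpha$ if necessary so that this annulus lies in the domain of $\phi$ and the image stays in $U$), and real analytic in $(z,\tau,\eta)$.

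Next, extract the singularity in a regular way. Since $\phi$ is holomorphic in its first slot, the standard factorization
\begin{equation*}
\phi(w,\tau)-\phi(z,\tau)=(w-z)\psi(w,z,\tau),\qquad \psi(w,z,\tau)=\int_0^1\phi_w(z+s(w-z),\tau)\,ds,
\end{equation*}
gives a function $\psi$ that is holomorphic in $(w,z)$, real analytic in $\tau$, and satisfies $\psi(z,z,\tau)=\phi_w(z,\tau)\neq0$ because $\phi(\cdot,\tau)$ is conformal. By compactness of $\{|z|=1\}\times\{|\tau|\le\delta_1'\}$ for any $\delta_1'<\delta_1$, $\psi$ stays bounded away from zero on a uniform neighborhood of the diagonal, so dividing by $\psi$ keeps the numerator holomorphic in $w$ on the annulus, and the integrand takes the form $G(w,z,\tau,\eta)/(w-z)$ with $G$ holomorphic in $w$ on $1-\alpha<|w|<1+\alpha$, holomorphic in $z$ on a neighborhood of $|z|=1$, and real analytic in $(\tau,\eta)$.

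Finally, remove the principal value by contour deformation. For $|z|=1$ and $0<\epsilon<\alpha$, deforming the symmetric pair of contours $|w|=1\pm\epsilon$ and using that $G(\cdot,z,\tau,\eta)/(w-z)$ has only the simple pole at $w=z$ in the annulus, one obtains
\begin{equation*}
\mathrm{p.v.}\,\frac{1}{2\pi i}\int_{|w|=1}\frac{G(w,z,\tau,\eta)}{w-z}\,dw=\frac{1}{2}\left[\frac{1}{2\pi i}\int_{|w|=1+\epsilon}+\frac{1}{2\pi i}\int_{|w|=1-\epsilon}\right]\frac{G(w,z,\tau,\eta)}{w-z}\,dw.
\end{equation*}
On each of the two right-hand contours the denominator $w-z$ is bounded away from $0$ uniformly for $|z|=1$ and $|\tau|\le\delta_1'$, so differentiation under the integral sign shows that each integral is real analytic in $(z,\tau,\eta)$; this gives the conclusion.

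The main point of care is the compatibility of neighborhoods in Step~2: one needs a single annulus $1-\alpha<|w|<1+\alpha$ and a single $|\tau|\le\delta_1'$ on which all of the extensions (the complexified $f$, the conjugated $\overline\phi$, and the non-vanishing $\psi$) are simultaneously defined. This is where compactness of $|z|=1$ is used, and it is the only nontrivial ingredient beyond routine holomorphic calculus; everything after Step~3 is formal.
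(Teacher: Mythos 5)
Your proof is correct and follows essentially the same route as the paper's: pull the integral back to $|w|=1$, factor $\phi(w,\tau)-\phi(z,\tau)=(w-z)\psi$ with $\psi$ nonvanishing by conformality, and use the annular holomorphic extension of the real-analytic data to move the contour off the singularity (the paper delegates this last step to its preceding lemma on the Cauchy transform of real analytic functions on the circle, using a single shifted circle where you use the symmetric average of the two circles $|w|=1\pm\epsilon$). The extra care you take with the Jacobian factor $\phi_w$ and with spelling out the Sokhotski--Plemelj average are details the paper leaves implicit; no substantive difference.
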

\begin{proof}
Pick $r_1$, $1<r_1<r$ so that $\phi(z,\tau)$ is 1-1 on $|z|\le r_1$ for each $\tau$. 
The Cauchy transform on $\gamma_{\tau}$ pulls back to $|z|=1$ as
$$\frac{1}{2\pi i}p.v. \int_{|\zeta|=1}\frac{f(\zeta, \eta)d\zeta}{\phi(\zeta,\tau)-\phi(z,\tau)}.$$
Write $\phi(\zeta,\tau)-\phi(z,\tau)=h(z,\zeta,\tau)(\zeta-z)$, where $h$ is real analytic and non-vanishing. 
Apply Lemma 2 to finish the proof. 
\end{proof}

Let $D\subseteq \bf C^2$ be a $C^{\omega}$ strictly convex domain. With an affine change of coordinates, we may arrange that $0\in \partial D$, and that $T_0(\partial D)=\{v=0\}$, where coordinates are $z=x+iy$ and $w=u+iv$. 
Given any point $p\in \partial D_t$ with $t$ and $p$ small enough, there is a complex affine change of coordinates sending $p$ to $0$ and $T_p(\partial D_t)$ to $\{v=0\}$. This 
 transformation can be taken to be $C^{\omega}$ in $t$ and $p$.
Set $\tau=(x,y,u)$. For small enough $t$, $\tau$ is a local coordinate on $\partial D_t$. 
The real analyticity of $B_t(f_t)$ will follow if we show that the Bochner-Martinelli transform at  $0$ varies real analytically in $t, \tau$ for slices which are close to $w=0$ (the transverse slices are easily dealt with).  
To show that the Bochner-Martinelli transform of $f_t$ varies real analytically, it is  to show that the Cauchy transform on each slice through $0$  varies real  analytically in $\tau, t$, with bounds. In this case, bounds follow from real analyticity at 0 of a related object.

 We consider the local picture.
In coordinates $z=x+iy$ and $w=u+iv$, we may suppose that near 0,  $\partial D$ is given by the graph of a function $v=\phi(x,y,u)=a(t)x^2+b(t)y^2+c(t)u^2+d(t)ux+e(t)uy+f(t)xy +O(|x|,|y|,|u|)^3$, where the last term depends on $t$ as $O(1)$. Consider the slice of $\partial D_t$ by the complex line $w=\alpha z$ where by rotation we may assume that $\alpha$ is real.
 By the assumption of strict convexity, this   is a graph over a strictly convex curve $C_{\alpha, t}$.  Dilate by $x=\alpha r$, $y=\alpha s$ to get a new set of curves $\tilde{C_{\alpha,t}}$ which are a real analytic deformation of the $\alpha=0$ limit, which depends only on the quadratic part of the defining function. We remark here that the Cauchy transform operator on a curve commutes with complex affine transformations of $\bf C$. Therefore we can compute the Cauchy transforms of $f_t$ on the dilated curves. For small enough $\delta$ and $\epsilon$, there is a map $\psi: [0,\epsilon)\times(-\delta,\delta)\times S^1 \rightarrow \bf C$  such that $\psi(k,t,\cdot)$ parametrizes 
$\tilde{C_{\alpha,t}}$ and is real analytic at $k=0$. By the remark about real analytic variations of conformal maps, we may assume that $\psi(k,t,\cdot)$ is the boundary function for a conformal map with fixed base point.  The function $f_t$ on the slices $w=k\alpha z$ slices of $\partial D_t$ pulls back to to a real analytic function on $[0,\epsilon)\times(-\delta,\delta)\times S^1$. 
Finally, the real analyticity of the Cauchy transform  in $t$ and $\tau$ follows from Lemma 5. 
The map $\psi$ obviously varies real analytically as the argument of $\alpha$ changes. This is sufficient to prove the real analyticity of $B_t(f_t)$. 

We remark that this method can be adapted to show  real analytic stability of the theorems of Baracco, et. al. The goal should be to prove that 
theorems of the CR Hartogs type are generically true for $L^P$ functions. At present there is no evidence for the case of boundaries which are not real analytic. The present work sheds no light on  this question.

\end{document}